\newcommand{\DOIFPDF}[2]{\ifx\pdfoutput\undefined #2\else#1\fi}
\newcommand{\EM}{\ensuremath}
\newcommand{\PDFABLE}[2]{%
\newif\ifpdf%
\ifx\pdfoutput\undefined\pdffalse%
\else\pdftrue\pdfoutput=1\pdfcompresslevel=9\fi%
\ifpdf%
 \usepackage#1
 \usepackage[pdftex,%
             a4paper,%
             colorlinks,%
             citecolor=blue,%
             pagebackref,%
             plainpages=false]{hyperref}%
\else%
 \usepackage#2
 \usepackage{url}
\fi}
\newcommand{\@THMSTYLES}{%
  \newtheoremstyle{bodyrm}
  {3pt}
  {3pt}
  {}
  {}
  {\bfseries\sffamily}
  {.}
  { }
  {}
  \newtheoremstyle{bodyit}
  {3pt}
  {3pt}
  {\itshape}
  {}
  {\bfseries\sffamily}
  {.}
  { }
  {}
}
\newcommand{\THMEN}{%
  \@THMSTYLES
  \theoremstyle{bodyit}
  \newtheorem{thm}{Theorem}[section]%
  \newtheorem{cor}[thm]{Corollary}%
  \newtheorem{prop}[thm]{Proposition}%
  \newtheorem{lem}[thm]{Lemma}%
  \theoremstyle{bodyrm}%
  \newtheorem{defi}[thm]{Definition}%
  \newtheorem{xpl}[thm]{Example}%
  \newtheorem{exo}[thm]{Exercise}%
  \newtheorem{hyp}[thm]{Hypothesis}%
  \newtheorem{eur}[thm]{Heuristics}%
  \newtheorem{pro}[thm]{Problem}%
  \newtheorem{rem}[thm]{Remark}%
  \newtheorem{prp}[thm]{Property}%
}
\newcommand{\THMFR}{%
  \@THMSTYLES
  \theoremstyle{bodyit}
  \newtheorem{thm}{Théorème}[section]%
  \newtheorem{prop}[thm]{Proposition}%
  \theoremstyle{bodyrm}%
  %
  %
  %
  %
  %
  %
  %
  %
}
\newcommand{\SMALLSECS}{%
 \renewcommand{\section}{\@startsection%
  {section}
  {1}
  {0em}
  {\baselineskip}
  {0.5\baselineskip}
  {\normalfont\large\bfseries}}
 \renewcommand{\subsection}{\@startsection%
  {subsection}
  {2}
  {0em}
  {\baselineskip}
  {0.25\baselineskip}
  {\normalfont\bfseries}}
}
\providecommand{\timenow}{\@tempcnta\time
\@tempcntb\@tempcnta
\divide\@tempcntb60
\ifnum10>\@tempcntb0\fi\number\@tempcntb
\multiply\@tempcntb60
\advance\@tempcnta-\@tempcntb
:\ifnum10>\@tempcnta0\fi\number\@tempcnta}
\newcommand{\versiondetravail}{%
 \renewcommand{\@evenfoot}{%
 \hfil{\tiny\texttt{%
   Version préliminaire, compilée le \today{} à \timenow.}\hfill}}%
 \renewcommand{\@oddfoot}{\@evenfoot}%
}
\newcommand{\bE}{\EM{\mathbf{E}}}
\newcommand{\p}[4]{{#3}\!\left#1{#4}\right#2}
\newcommand{\covf}[1]{\mathbf{Cov}_{#1}}
\newcommand{\cov}[3]{\p(){\covf{#1}}{#2,#3}}
\newcommand{\Det}[1]{\mathrm{Det}\,}
\newcommand{\D}{\mathbf{D}}
\renewcommand{\leq}{\leqslant}
\renewcommand{\geq}{\geqslant}
\title{Non parametric estimation of the structural expectation of a stochastic increasing function}
\author{J-F.~\textsc{Dupuy} \& J-M.~\textsc{Loubes} \& E.~\textsc{Maza}}
\date{}
\newcommand{\mykeywords}{
Functional data analysis; Non parametric warping model; Structural expectation; Curve registration;
.}
\newcommand{\mysubjclass}{
62G05, 62G20
.}
\newcommand{\rr}{\ensuremath{\mathrm{r}}}
\newcommand{\Prob}{{\mathbf{P}}}
\newtheorem{theorem}{Theorem}[section]
\newtheorem{lemma}[theorem]{Lemma}
\newtheorem{proposition}[theorem]{Proposition}
\newtheorem{corollary}[theorem]{Corollary}
\begin{document}

\maketitle
{\small
\begin{abstract}
This article introduces a non parametric warping model for functional data. When the outcome of an experiment is a sample of curves, data can be seen as realizations of a stochastic process, which takes into account the small variations between the different observed curves. The aim of this work is to define a mean pattern which represents the main behaviour of the set of all the realizations. So we define the structural expectation of the underlying stochastic function. Then we provide   empirical estimators of this structural expectation and of each individual warping function. Consistency and asymptotic normality for such estimators are proved.\\
\end{abstract}
{ \noindent
 \textbf{Keywords}: \mykeywords \\
 \textbf{Subject Class. MSC-2000}: \mysubjclass}
}
\section{Statistical model for functional phase variations}\label{s_model}
Functional data analysis deals with the analysis of experiments where  one or several quantities are recorded during a time period for different individuals, resulting in a sample of observed curves. One of the main difficultis is given by the fact that curves usually not only present amplitude variability (a variation in the $y$-axis) but also time or phase variability (a variation in the $x$-axis). Hence the classical cross-sectional mean does not make sense and the definition of an appropriate population mean is even not obvious. Giving a sense to the common behaviour of a sample of curves, and finding a meaningful mean curve in this context is thus an important issue, called curve registration, or time warping problem, which  first appeared in the engineering literature in \cite{Sakoe78}. \vskip .1in
Several methods have been proposed over the years to estimate the mean pattern of a sample of curves. A popular method consists in i) first aligning the curves to a given template by warping the time axis, ii) then taking the mean of all the dewarped curves. Such methods are increasingly common in statistics, see \cite{Ramsay02} for a review.  A landmark registration methodology is proposed by \cite{Kneip92} and further developed by \cite{jbigot}.  A non parametric method is investigated in \cite{Ramsay98} and in \cite{Kneip00}, using  local regressions.  Dynamic time warping methodology is developed by \cite{Wang99}.  An alternative approach is provided  in \cite{Gamboa05}, where semi-parametric estimation of shifted curves is studied. But these methods imply choosing a starting curve as a fixed point for the alignment process. This initial choice may either bias the estimation procedure, or imply strong and restrictive identifiability conditions.\vskip .1in
In this work, we consider a second point of view. We define an archetype representing the common behaviour of the sample curves directly from the data, without stressing a particular curve. Such a method has the advantage of not assuming any technical restrictions on the data and so, enables to handle a large variety of cases. However, the registration procedure and the common pattern have to be clearly defined. \\
\indent We observe $i=1,\dots,m$ curves $f_i:[a,b] \to \mathbb{R}$ at equidistant discrete times $t_{ij}\in [a,b],\: j=1,\dots,n$. So the data can be written as
\begin{equation}\label{f_dataset}
Y_{ij}=f_i\left(t_{ij}\right),\:i=1,\dots,m,\:j=1,\dots,n.
\end{equation}
So, the registration problem aims at  finding  a mean pattern $f$ and warping functions $h_i$ which align the observed curves, i.e such that $\forall i=1,\dots,m,\: f=f_i \circ h_i$. Hence each curve is obtained by warping the original curve $f$ using the  warping functions $h_i$.\vskip .1in 
Defining the registration operator is a difficult task. In this paper we propose a random warping procedure which takes into account the variability of the deformation as a random effect. So, we assume that there exists a random process $H$ such that the data are i.i.d realizations of this process.
Let $H$ be this warping stochastic process  defined as
\begin{equation}\label{d_h}
\begin{array}{rrcl}
H:&\Omega&\rightarrow&\mathcal{C}\left([a,b]\right)\\
&w&\mapsto&H(w,\cdot),
\end{array}
\end{equation}
where $\left(\Omega,\mathcal{A},\Prob\right)$ is an unknown probability space, and $\left(\mathcal{C}\left([a,b]\right),\left\|\cdot\right\|_\infty,\mathcal{B}\right)$ is the set of all real continuous functions defined on the interval $[a,b]$, equipped with the uniform norm and with its Borel algebra. Consider $h_1,\dots,h_m$ i.i.d realizations of the process $H(t)$. $h_i$  warps a mean pattern $f$ onto the $i$-th observation curve $f_i$. Hence, model \eqref{f_dataset} can be modeled by
\begin{equation}\label{m_model}
Y_{ij}=f_i(t_{ij})=f\circ h_i^{-1}(t_{ij}).
\end{equation}
For sake of simplicity, we will write $f_i(t)=f\circ h_i^{-1}(t)$, for all $i\in{\{1,\dots,m\}}$. We point out that $h_i^{-1}$ is well defined since the warping processes are assumed to be continuous increasing functions.\vskip .1in
Under general assumptions \eqref{d_h}, model \eqref{m_model} is not identifiable. More precisely, the unknown function $f$ and the unknown warping process $H$ can not be estimated. Indeed, if $\tilde{h}:[a,b]\to[a,b]$ is an increasing continuous function, with $\tilde{h}(a)=a$ and $\tilde{h}(b)=b$, then, for all $i\in{\{1,\dots,m\}}$ and all $j\in{\{1,\dots,n\}}$, we have that
$Y_{ij}=f\circ\tilde{h}^{-1}\circ\tilde{h}\circ h_i^{-1}(\cdot,t_{ij}).$
Hence, the function $f\circ\tilde{h}^{-1}$, associated with the warping process $H\circ\tilde{h}^{-1}$, is also a solution of model \eqref{m_model}.\vskip .1in
 The aim of this paper is to build a new kind of pattern which represents the common functional feature of the data but still that can be estimated together with the warping procedure.  For this, let $\phi(.)$ be the expectation of the warping process and  define the {\sl structural expectation} ${f}_{ES}$ as 
$${f}_{ES}:=f\circ\phi^{-1}.$$
The structural expectation is obviously not the function $f$, but the function $f$ composed with $\phi^{-1}$, the inverse of the expectation of $H$. Hence it can  be seen as the mean warping of the function $f$ by the stochastic process $H$. In this article, we aim at studying the properties of the structural expectation, and finding an estimator of the structural expectation and of the warping paths $h_i$.\vskip .1in
The article is organized as follows. In Section~\ref{s_model}, we introduce a warping functional model with a stochastic phase variation, and we introduce the structural expectation.  In Section~\ref{s_estimators}, we define empirical estimators of the structural expectation and of the individual warping functions. Asymptotic properties of these estimators are investigated. Proofs are postponed to Section~\ref{s_proofs}.  Section~\ref{s:cmieux} investigates some extensions of the proposed methodology, in particular to the case of noisy non increasing functions. The results of a simulation study are reported in Section~\ref{s_data}. There, we also apply the proposed estimators to a real data set.
\section{Theoretical study of a particular case: warping  of strictly increasing functions}\label{s_estimators}
First, consider the case where $f$ is a strictly increasing. Hence the inverse function $f^{-1}$ exists and is also strictly increasing. Moreover, a phase warping of function $f$ (i.e. on $x$-axis) corresponds to an amplitude warping of function $f^{-1}$ (i.e. on $y$-axis). We propose estimators of  both the inverse of the structural expectation $f_{ES}^{-1}=\phi\circ f^{-1}$, and also of each individual warping function $\phi\circ h_i^{-1}$, for all $i\in{\{1,\dots,m\}}$, and finally an of the structural expectation $f_{ES}=f\circ\phi^{-1}$.\vskip .1in
Note that all the asymptotic results are taken with respect to $m$ and $n$, so we recall that $u(m,n)\xrightarrow[m,n\to\infty]{}c$
if, and only if, we have $$\forall\epsilon>0,\:\exists(m_0,n_0)\in\mathbb{N}^2,\:\forall(m,n)\in\mathbb{N}^2,m>m_0\mbox{ and }n>n_0\Rightarrow\left|u(m,n)-c\right|<\epsilon.$$
We will assume the following conditions on the warping process in order to define a good registration procedure.\\
 The warping process does not change the timeline (not time inversion) and leaves fixed the two extreme points, so for almost all $w\in\Omega$,  assume that
\begin{itemize}
\item[i)] $H(w,\cdot)$ is an increasing function,
\item[ii)] $H(w,a)=a$ and $H(w,b)=b$.
\end{itemize}
The following proposition introduces respectively the expectation, the second order moment and the covariance function of $H$.
\begin{prop}
Under assumption \eqref{d_h}, the expectation $\phi(\cdot)$, the second order moment $\gamma(\cdot)$ and the covariance function $\mathrm{r}(\cdot,\cdot)$ of the stochastic process $H$ are well defined. $\phi$ and $\gamma$  are also continuous increasing functions. Moreover, we have $\phi(a)=a$, $\phi(b)=b$, $\gamma(a)=a^2$ and $\gamma(b)=b^2$. As a consequence, we have that ${\rm var} H(\cdot,a)={\rm var} H(\cdot,b)=0$.
\end{prop}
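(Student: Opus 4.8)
The plan is to verify each assertion in turn, using only the defining assumptions i) and ii) on $H$ together with standard properties of continuous increasing functions and of the expectation operator. First I would address the well-definedness of $\phi$, $\gamma$ and $\mathrm{r}$. Since for almost all $w$ the path $H(w,\cdot)$ is increasing on $[a,b]$ with $H(w,a)=a$ and $H(w,b)=b$, monotonicity forces $a\leq H(w,t)\leq b$ for every $t\in[a,b]$. Hence $H(\cdot,t)$ is a bounded random variable for each fixed $t$, so all its moments exist; in particular $\phi(t):=\moyf{}(H(\cdot,t))$ and $\gamma(t):=\moyf{}(H(\cdot,t)^2)$ are finite, and by the \cs{} inequality the covariance $\mathrm{r}(s,t):=\moyf{}(H(\cdot,s)H(\cdot,t))-\phi(s)\phi(t)$ is well defined as well. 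The measurability needed to take these expectations follows from the assumption \eqref{d_h} that $H$ takes values in $\mathcal{C}([a,b])$ equipped with its Borel algebra, since evaluation maps are then measurable.

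Next I would establish the boundary values and the monotonicity of $\phi$ and $\gamma$. The boundary identities are immediate: taking expectations in $H(\cdot,a)=a$ and $H(\cdot,b)=b$ gives $\phi(a)=a$, $\phi(b)=b$, $\gamma(a)=\moyf{}(a^2)=a^2$ and $\gamma(b)=b^2$. For monotonicity, fix $s\leq t$; then $H(w,s)\leq H(w,t)$ for almost all $w$, and monotonicity of the expectation yields $\phi(s)\leq\phi(t)$. For $\gamma$, note that $0\leq a\leq H(w,s)\leq H(w,t)$ (after observing that all values lie in $[a,b]$, and here I use that the functions are nonnegative, which holds provided $a\geq 0$; more generally one squares within $[a,b]$ and uses that $x\mapsto x^2$ is increasing on the relevant range), so $H(w,s)^2\leq H(w,t)^2$ almost surely and again taking expectations gives $\gamma(s)\leq\gamma(t)$.

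For continuity I would argue via dominated convergence. Fix $t_0\in[a,b]$ and a sequence $t_k\to t_0$; for almost every $w$ the path $H(w,\cdot)$ is continuous, so $H(w,t_k)\to H(w,t_0)$, and the uniform bound $\ABS{H(w,t_k)}\leq\max(\ABS{a},\ABS{b})$ provides an integrable dominating function. Hence $\phi(t_k)=\moyf{}(H(\cdot,t_k))\to\phi(t_0)$, and the same argument applied to $H(\cdot,t_k)^2$ gives continuity of $\gamma$. The final claim is then a one-line consequence: $\var{}{H(\cdot,a)}=\gamma(a)-\phi(a)^2=a^2-a^2=0$, and likewise $\var{}{H(\cdot,b)}=b^2-b^2=0$.

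The only genuinely delicate point is the measurability required to make the expectations and the almost-sure manipulations legitimate; everything else is a routine application of monotonicity and dominated convergence. I expect the main obstacle to be formulating the monotonicity of $\gamma$ cleanly without assuming $a\geq 0$, since squaring is not monotone across $0$. This is resolved by recalling that all sample paths satisfy $H(w,\cdot)\in[a,b]$, so one compares $H(w,s)^2$ and $H(w,t)^2$ only for arguments lying in a common interval on which a direct sign analysis settles the inequality.
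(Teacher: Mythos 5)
Your proof is correct where the proposition is correct, and it follows essentially the same route as the paper's (much terser, two-line) argument: boundedness of the paths gives existence of the moments, path monotonicity gives monotonicity of $\phi$, path continuity plus dominated convergence gives continuity of $\phi$ and $\gamma$, and the boundary values and vanishing variances follow by taking expectations of the almost-sure identities $H(\cdot,a)=a$ and $H(\cdot,b)=b$. The one substantive point is the monotonicity of $\gamma$, which you rightly single out as the delicate step: it genuinely requires $a\geq 0$, and your closing suggestion that a ``direct sign analysis'' settles the general case cannot work, because the claim is simply false when $a<0$. Indeed, take $H(w,t)=t$ for all $w$; this identity warping satisfies assumptions i) and ii), yet $\gamma(t)=t^2$ is decreasing on $[a,0]$. (Note also that $\gamma(a)=a^2\leq b^2=\gamma(b)$ already forces $|a|\leq|b|$.) So the honest conclusion is that the proposition implicitly assumes $a\geq 0$ --- consistent with the paper's applications, where $[a,b]=[0,1]$ or scores in $[0,20]$ --- and under that reading your argument is complete and fills in all the details (measurability of evaluations, dominated convergence, Cauchy--Schwarz for $\mathrm{r}$) that the paper's proof leaves unsaid.
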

\begin{proof}
The process $H$ is bounded and increasing. As a consequence, $\phi$ and $\gamma$ exist. Moreover, $H$ is a continuous increasing process, which leads to continuous and increasing first and second order moments.
\end{proof}
In order to prove asymptotic results, the following technical assumptions on the warping process $H$ and on the function $f$ are needed:
\begin{description}
\item[Assumptions] 
\end{description}
\begin{enumerate}
\item There exists a constant $C_1>0$ such that, for all $(s,t)\in[f(a),f(b)]^2$, we have
\begin{eqnarray}\label{h_1}
\bE\left|H(s)-\bE H(s)-\left(H(t)-\bE H(t)\right)\right|^2\leq C_1|s-t|^2.
\end{eqnarray}
\item There exists a constant $C_2>0$ such that, for all $(s,t)\in[f(a),f(b)]^2$, we have
\begin{eqnarray}\label{h_2}
\left|f^{-1}(s)-f^{-1}(t)\right|^2\leq C_2|s-t|^2.
\end{eqnarray}
\item There exists a constant $C_3>0$ such that, for all $\omega\in\Omega$, for all $(s,t)\in[a,b]^2$, we have
\begin{eqnarray}\label{h_3}
\left|H^{-1}(\omega,s)-H^{-1}(\omega,t)\right|^2\leq C_3|s-t|^2.
\end{eqnarray}
\end{enumerate}
\subsection{Estimator of the structural expectation $f_{ES}$}
Since $f_i=f \circ h_i^{-1}$ we have $f_i^{-1}=h_i \circ f^{-1}$. Hence ${\bf E}(f_i)=({\bf E} (H)) \circ f^{-1}$. Hence it seems natural to consider the mean of functions $\left(f_i^{-1}\right)_{i\in{\{1,\dots,m\}}}$ in order to estimate the inverse of the structural expectation. For all $y\in[f(a),f(b)]$, and for all $i\in{\{1,\dots,m\}}$, define
\begin{equation}\label{approxt}
j_i(y)=\arg\min_{j\in{\{1,\dots,n\}}}\left|Y_{ij}-y\right| \quad \mbox{ and } \quad T_i(y):=t_{ij_i(y)}.
\end{equation}
Then, the empirical estimator of the inverse of the structural expectation is defined by
\begin{equation}\label{e_fm1}
\widehat{{f_{ES}^{-1}}}(y)=\frac{1}{m}\sum_{i=1}^{m}T_i(y).
\end{equation}
The following theorem provides consistency and asymptotic normality of estimator \eqref{e_fm1}.
\begin{theorem}[Consistency of the inverse of the structural expectation]\label{t_fm1}
Under Assumption \eqref{d_h},  
$$\left\Vert\widehat{{f_{ES}^{-1}}}-{f_{ES}^{-1}}\right\Vert_\infty\xrightarrow[m,n\to\infty]{as}0.$$
Moreover, let $n=m^{\frac{1}{2}+\alpha}$ with $\alpha>0$, and assume Conditions \eqref{h_1} and \eqref{h_2}. Then,  
$$\sqrt{m}\left(\widehat{{f_{ES}^{-1}}}-{f_{ES}^{-1}}\right)\xrightarrow[m\to\infty]{\D}G,$$
where $G$ is a centered Gaussian process  with  covariance given by: for all $(s,t)\in[f(a),f(b)]^2$, 
$${\rm cov}(G(s),G(t))=\rr\left(f^{-1}(s),f^{-1}(t)\right).$$
\end{theorem}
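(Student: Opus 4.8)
The plan is to work from the exact decomposition, valid for every $y \in [f(a), f(b)]$,
\[
\widehat{f_{ES}^{-1}}(y) - f_{ES}^{-1}(y) = \underbrace{\frac{1}{m}\sum_{i=1}^m (T_i(y) - f_i^{-1}(y))}_{A_{m,n}(y)} + \underbrace{\frac{1}{m}\sum_{i=1}^m (f_i^{-1}(y) - f_{ES}^{-1}(y))}_{B_m(y)},
\]
which separates the discretization error $A_{m,n}$ from the sampling fluctuation $B_m$. Two facts drive the argument. First, since $f_i = f \circ h_i^{-1}$ is strictly increasing with $f_i(a) = f(a)$ and $f_i(b) = f(b)$, the point $t^\star = f_i^{-1}(y)$ lies in $[a,b]$ and is bracketed by two consecutive grid points; as $j \mapsto f_i(t_{ij})$ is monotone, the minimizer $j_i(y)$ of $|Y_{ij} - y|$ must be one of these two indices, so that $|T_i(y) - f_i^{-1}(y)| \leq \delta_n$ with $\delta_n = (b-a)/(n-1)$, uniformly in $y$ and $i$; hence $\|A_{m,n}\|_\infty \leq \delta_n$ deterministically. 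Second, $f_i^{-1} = h_i \circ f^{-1}$, so $\bE f_i^{-1}(y) = \phi(f^{-1}(y)) = f_{ES}^{-1}(y)$ and $B_m$ is a normalized sum of i.i.d.\ centred random functions.

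For consistency it then suffices to show $\|B_m\|_\infty \to 0$ almost surely. The functions $f_i^{-1} = h_i \circ f^{-1}$ are i.i.d.\ random elements of the separable Banach space $(\mathcal{C}([f(a), f(b)]), \|\cdot\|_\infty)$, bounded by $b$ and hence Bochner-integrable, so Mourier's strong law of large numbers yields $\frac{1}{m}\sum_{i=1}^m f_i^{-1} \to \bE f_1^{-1} = f_{ES}^{-1}$ almost surely in sup-norm. (Alternatively, as the $f_i^{-1}$ are monotone and $f_{ES}^{-1}$ is continuous, almost sure pointwise convergence on a countable dense set upgrades to uniform convergence by a Dini-type monotonicity argument.) Combined with $\|A_{m,n}\|_\infty \leq \delta_n \to 0$, this gives $\|\widehat{f_{ES}^{-1}} - f_{ES}^{-1}\|_\infty \to 0$ almost surely as $m, n \to \infty$.

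For the asymptotic normality I would scale by $\sqrt{m}$ and write $\sqrt{m}(\widehat{f_{ES}^{-1}} - f_{ES}^{-1}) = \sqrt{m}\,A_{m,n} + \sqrt{m}\,B_m$. The discretization term is now negligible precisely because of the calibration $n = m^{1/2+\alpha}$: indeed $\|\sqrt{m}\,A_{m,n}\|_\infty \leq \sqrt{m}\,\delta_n \asymp \sqrt{m}/n = m^{-\alpha} \to 0$. It then remains to establish a functional central limit theorem for $\sqrt{m}\,B_m = m^{-1/2}\sum_{i=1}^m \xi_i$, where $\xi_i := f_i^{-1} - f_{ES}^{-1}$, in $\mathcal{C}([f(a), f(b)])$, and to conclude with the converging-together (Slutsky) lemma. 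The finite-dimensional distributions converge to the announced Gaussian law by the ordinary multivariate CLT for i.i.d.\ vectors, and the covariance is identified by a direct computation: writing $s' = f^{-1}(s)$ and $t' = f^{-1}(t)$,
\[
\mathrm{cov}(\xi_1(s), \xi_1(t)) = \mathrm{cov}(H(s'), H(t')) = \rr(f^{-1}(s), f^{-1}(t)).
\]

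The crux, and the step I expect to be the main obstacle, is the tightness (asymptotic equicontinuity) of $m^{-1/2}\sum_{i=1}^m \xi_i$ in the uniform topology. Here Assumptions \eqref{h_1} and \eqref{h_2} combine to control the increments: substituting $s' = f^{-1}(s)$, $t' = f^{-1}(t)$ into \eqref{h_1} and then applying \eqref{h_2} gives $\bE|\xi_1(s) - \xi_1(t)|^2 \leq C_1 |f^{-1}(s) - f^{-1}(t)|^2 \leq C_1 C_2 |s - t|^2$. A bound of this form is exactly a Kolmogorov--Chentsov / Billingsley-type modulus condition, guaranteeing both that the Gaussian limit admits a continuous version and that the sequence of partial-sum processes is tight in $\mathcal{C}$. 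Assembling finite-dimensional convergence with tightness yields $\sqrt{m}\,B_m \xrightarrow{\D} G$, and Slutsky transfers the limit to $\sqrt{m}(\widehat{f_{ES}^{-1}} - f_{ES}^{-1})$. I anticipate the delicate points to be the rigorous verification of the hypotheses of the $\mathcal{C}$-valued invariance principle one invokes and the uniform-in-$y$ control of the discrete selection $T_i(y)$; the remaining manipulations are routine.
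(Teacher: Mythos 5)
Your proposal is correct and follows essentially the same route as the paper: your term $B_m$ is exactly the paper's continuous-model estimator $\overline{f_{ES}^{-1}}-f_{ES}^{-1}$ (handled there via the Banach-space SLLN of Ledoux--Talagrand, i.e.\ Mourier's theorem, and a finite-dimensional CLT plus the moment-based tightness criterion yielding the same bound $C_1C_2|s-t|^2$), while your $A_{m,n}$ is the paper's discretization gap $\widehat{f_{ES}^{-1}}-\overline{f_{ES}^{-1}}$, bounded by the grid mesh and killed at rate $m^{-\alpha}$ under $n=m^{1/2+\alpha}$ before invoking Slutsky. The only difference is organizational: the paper isolates the continuous-model result as a separate theorem, whereas you run the two pieces through a single additive decomposition.
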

From (\ref{e_fm1}) and (\ref{approxt}), $\widehat{{f_{ES}^{-1}}}$ is an increasing step function with jumps occuring at say, $K(m,n)$ points $v_1,\ldots,v_{K(m,n)}$ in $[f(a),f(b)]$, such that $f(a)= v_0<v_1< \ldots< v_{K(m,n)}< v_{K(m,n)+1}=f(b)$.
Hence for all $y\in[f(a),f(b)]\backslash (v_k)_{k\in \mathcal K}$ $(\mathcal K=\{0,\ldots,K(m,n)+1\})$, $\widehat{{f_{ES}^{-1}}}(y)$ can be expressed as $$\widehat{{f_{ES}^{-1}}}(y)=\sum_{k=0}^{K(m,n)}u_k {\bf 1}_{(v_k,v_{k+1})}(y)$$ with $a=u_0<u_1<\ldots<u_{K(m,n)-1}< u_{K(m,n)}=b$. A natural estimator of the structural expectation ${f_{ES}}$ is then obtained by linear interpolation between the points $(u_k,v_k)$. For all $t\in[a,b]$, let define $$\widehat{{f_{ES}}}(t)=\sum_{k=0}^{K(m,n)-1}\left(v_k+\frac{v_{k+1}- v_k}{u_{k+1}-u_k}(t-u_k)\right){\bf 1}_{[u_k,u_{k+1})}(t)+v_{K(m,n)}{\bf 1}_{\{b\}}(t).$$ Note that by construction, this estimator is strictly increasing and continuous on $[a,b]$. The following theorem states its consistency.
\begin{theorem}[Consistency of the estimator of the Structural Expectation]\label{t_f}
Under Assumption \eqref{d_h},  we have 
$$\left\Vert\widehat{f_{ES}}- f_{ES}\right\Vert_\infty\xrightarrow[m,n\to\infty]{as}0.$$
\end{theorem}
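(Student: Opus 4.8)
The plan is to deduce the statement from the uniform consistency of $\widehat{f_{ES}^{-1}}$ already furnished by Theorem~\ref{t_fm1}, exploiting the fact that, by construction, $\widehat{f_{ES}}$ is an approximate inverse of the increasing step function $\widehat{f_{ES}^{-1}}$. The guiding principle is that uniform convergence of increasing functions transfers to their (generalized) inverses as soon as the limit $f_{ES}^{-1}=\phi\circ f^{-1}$ is a \emph{continuous, strictly increasing} bijection from $[f(a),f(b)]$ onto $[a,b]$ --- which it is, since $f$ and $\phi$ have these properties. Geometrically, passing from a graph to the graph of its inverse is a reflection across the diagonal, hence a Hausdorff isometry, and Hausdorff convergence of graphs together with continuity of the limit amounts to uniform convergence; I would nonetheless carry this out through a direct quantitative estimate rather than the Hausdorff formalism.

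Write $\epsilon_{m,n}=\|\widehat{f_{ES}^{-1}}-f_{ES}^{-1}\|_\infty$, which tends to $0$ almost surely by Theorem~\ref{t_fm1}, and let $\omega$ denote the modulus of continuity of $f_{ES}$ on the compact interval $[a,b]$, so that $\omega(\delta)\to 0$ as $\delta\to 0$ by uniform continuity. The first step is to localise the interpolation knots $(u_k,v_k)$ near the graph of $f_{ES}$. Since $\widehat{f_{ES}^{-1}}$ takes the value $u_k$ on $(v_k,v_{k+1})$, letting $y\downarrow v_k$ in the bound $|\widehat{f_{ES}^{-1}}(y)-f_{ES}^{-1}(y)|\le\epsilon_{m,n}$ and using the continuity of $f_{ES}^{-1}$ gives $|u_k-f_{ES}^{-1}(v_k)|\le\epsilon_{m,n}$. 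Applying the increasing map $f_{ES}$ then yields $|v_k-f_{ES}(u_k)|\le\omega(\epsilon_{m,n})$ for every $k$.

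The second step is the interpolation estimate. Fix $t\in[a,b]$ and pick $k$ with $t\in[u_k,u_{k+1}]$. By definition $\widehat{f_{ES}}(t)$ lies between $v_k$ and $v_{k+1}$, while monotonicity of $f_{ES}$ gives $f_{ES}(t)\in[f_{ES}(u_k),f_{ES}(u_{k+1})]$. Combining these with the knot localisation of the first step, both $\widehat{f_{ES}}(t)$ and $f_{ES}(t)$ lie in the interval $[f_{ES}(u_k)-\omega(\epsilon_{m,n}),\,f_{ES}(u_{k+1})+\omega(\epsilon_{m,n})]$, whose length is at most $\omega(u_{k+1}-u_k)+2\omega(\epsilon_{m,n})$. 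Hence $|\widehat{f_{ES}}(t)-f_{ES}(t)|\le\omega(\Delta_{m,n})+2\omega(\epsilon_{m,n})$, where $\Delta_{m,n}=\max_k(u_{k+1}-u_k)$ is the mesh of the $u$-partition.

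It remains to control $\Delta_{m,n}$. Since $\widehat{f_{ES}^{-1}}=\frac1m\sum_{i=1}^m T_i$ and each $T_i$ takes values only on the equidistant grid $\{t_{ij}\}$, every jump of $\widehat{f_{ES}^{-1}}$ is a superposition of elementary jumps of size $(b-a)/(n-1)$; as at most $m$ of these can occur simultaneously (generically exactly one), one gets $\Delta_{m,n}\le (b-a)/(n-1)\to 0$. Consequently $\omega(\Delta_{m,n})\to 0$ and $\omega(\epsilon_{m,n})\to 0$ almost surely, and taking the supremum over $t$ gives $\|\widehat{f_{ES}}-f_{ES}\|_\infty\to 0$ a.s. The only genuinely delicate point, and thus the main obstacle, is the transfer of the $y$-axis error $\epsilon_{m,n}$ into an $x$-axis error --- namely the knot localisation of the first step together with the mesh control --- for once $f_{ES}$ is known to be a continuous strictly increasing bijection, the rest of the argument is soft.
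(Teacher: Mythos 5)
Your proof is correct, but it takes a genuinely different route from the paper's. The paper argues through generalized inverses: it applies Lemma 21.2 of van der Vaart (1998) to convert the consistency of $\widehat{{f_{ES}^{-1}}}$ (Theorem~\ref{t_fm1}) into pointwise almost-sure convergence of the generalized inverse $(\widehat{{f_{ES}^{-1}}})^{-1}$ towards $({f_{ES}^{-1}})^{-1}=f\circ\phi^{-1}$, then bounds $|\widehat{{f_{ES}}}(t)-(\widehat{{f_{ES}^{-1}}})^{-1}(t)|$ by the gap $v_{k+1}-v_k$, which tends to zero by continuity, and finally upgrades pointwise to uniform convergence via Dini's theorem (legitimate here because the approximants are increasing and the limit is continuous on a compact interval). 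You instead give a direct quantitative estimate: the sup-norm error $\epsilon_{m,n}$ of the inverse estimator localises the interpolation knots, $|v_k-f_{ES}(u_k)|\le\omega(\epsilon_{m,n})$; monotonicity plus the interpolation construction then yield the explicit bound $\|\widehat{f_{ES}}-f_{ES}\|_\infty\le\omega(\Delta_{m,n})+2\omega(\epsilon_{m,n})$; and the mesh $\Delta_{m,n}=\max_k(u_{k+1}-u_k)$ is controlled by the sampling grid, since each $T_i$ is a step function whose individual jumps equal exactly one grid spacing (consecutive midpoints of the strictly increasing $Y_{ij}$ cannot coincide), so each jump of their average is at most $(b-a)/(n-1)$. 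Your route buys a self-contained, nonasymptotic error bound (hence rates, e.g. under a Lipschitz assumption on $f_{ES}$) and delivers uniformity directly, with no appeal to external lemmas or to Dini; the paper's route is shorter given the cited results and works on the $y$-axis gaps $v_{k(m,n)+1}-v_{k(m,n)}$ (whose convergence to zero it asserts rather tersely), whereas your dual control of the $x$-axis gaps $u_{k+1}-u_k$ is completely elementary. Both are valid proofs of the statement.
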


Obtaining confidence bands for ${f_{ES}^{-1}}$ is useful for describing and visualizing the uncertainty in the estimate of ${f_{ES}^{-1}}$. This requires finding an estimator of $\mbox{var}(G(y))=\gamma\circ f^{-1}(y)-\left\{{f_{ES}^{-1}}(y)\right\}^2$, $y\in[f(a), f(b)]$.
\begin{lemma}\label{lcb}
Let $y\in[f(a), f(b)]$. Let $\widehat{\gamma\circ f^{-1}}(y)=\frac{1}{m}\sum_{i=1}^{m}T_i^2(y)$, with $T_i(.)$ defined as in (\ref{approxt}). Then,
\begin{equation*}
\widehat{\gamma\circ f^{-1}}(y)-\left\{\widehat{{f_{ES}^{-1}}}(y)\right\}^2 \xrightarrow[m,n\to\infty]{as}\mbox{var}(G(y)).
\end{equation*}
\end{lemma}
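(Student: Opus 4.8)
The plan is to recognize the left-hand side as an empirical variance and to reduce the statement to the consistency of an empirical second moment, in complete analogy with the consistency part of Theorem~\ref{t_fm1}. Since $\widehat{f_{ES}^{-1}}(y)=\frac1m\sum_{i=1}^m T_i(y)$ and $\widehat{\gamma\circ f^{-1}}(y)=\frac1m\sum_{i=1}^m T_i^2(y)$, the quantity to study is (up to the usual factor) the empirical variance of the sample $T_1(y),\dots,T_m(y)$. Theorem~\ref{t_fm1} already gives $\widehat{f_{ES}^{-1}}(y)\to f_{ES}^{-1}(y)$ almost surely, so by continuity of $x\mapsto x^2$ the subtracted term converges to $\{f_{ES}^{-1}(y)\}^2$. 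The only genuinely new point is therefore to prove $\widehat{\gamma\circ f^{-1}}(y)\xrightarrow[m,n\to\infty]{as}\gamma\circ f^{-1}(y)$, after which the conclusion follows from the identity $\mbox{var}(G(y))=\gamma\circ f^{-1}(y)-\{f_{ES}^{-1}(y)\}^2$ recalled just before the statement.

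To prove the convergence of the second moment I would use the decomposition
$$\frac1m\sum_{i=1}^m T_i^2(y)-\gamma\circ f^{-1}(y)=\frac1m\sum_{i=1}^m\Big(T_i^2(y)-\big(h_i\circ f^{-1}(y)\big)^2\Big)+\Big(\frac1m\sum_{i=1}^m\big(h_i\circ f^{-1}(y)\big)^2-\gamma\circ f^{-1}(y)\Big),$$
in which the first term is an approximation error governed by the time discretization (the parameter $n$) and the second is a fluctuation governed by the number of curves (the parameter $m$). The key approximation is that $T_i(y)$ is close to the exact preimage $h_i\circ f^{-1}(y)$: since $Y_{ij}=f\circ h_i^{-1}(t_{ij})$ and $f\circ h_i^{-1}$ is strictly increasing, the grid point $T_i(y)=t_{ij_i(y)}$ minimizing $|Y_{ij}-y|$ must be one of the two grid points bracketing the exact solution of $f\circ h_i^{-1}(t)=y$; hence $|T_i(y)-h_i\circ f^{-1}(y)|$ is bounded by the mesh size $(b-a)/(n-1)$, uniformly in $i$ and in $y$.

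Given this bound, I would control the first term by writing $T_i^2(y)-(h_i\circ f^{-1}(y))^2=(T_i(y)-h_i\circ f^{-1}(y))(T_i(y)+h_i\circ f^{-1}(y))$ and using that the second factor is bounded (both summands lie in $[a,b]$), so that the whole term is at most a constant multiple of $(b-a)/(n-1)$ and tends to $0$ as $n\to\infty$, deterministically and uniformly in $\omega$. For the second term, the variables $(h_i\circ f^{-1}(y))^2$ are i.i.d.\ realizations of the bounded random variable $(H\circ f^{-1}(y))^2$, whose expectation is $\gamma\circ f^{-1}(y)$ by definition of the second-order moment $\gamma$; the strong law of large numbers then gives almost sure convergence to $\gamma\circ f^{-1}(y)$ as $m\to\infty$. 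Because the $n$-dependent error is deterministic and uniform in $i$, the two limits decouple and the double limit $(m,n)\to\infty$ holds in the sense recalled in Section~\ref{s_estimators}.

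The main obstacle, such as it is, lies in the uniform approximation step: one must argue carefully that the minimiser of $|Y_{ij}-y|$ over the grid brackets the true preimage, which relies on the strict monotonicity of $f\circ h_i^{-1}$ guaranteed by the standing assumptions, and that the resulting bound is uniform in $i$ so that it survives the averaging $\frac1m\sum_i$ without interacting with the randomness. Everything else is a routine combination of the strong law of large numbers and the continuity of the square, exactly as in the consistency part of Theorem~\ref{t_fm1}.
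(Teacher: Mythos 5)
Your proof is correct and follows essentially the same route as the paper's: both rest on the deterministic bracketing bound $|T_i(y)-f_i^{-1}(y)|\leq 1/n$ (with $f_i^{-1}=h_i\circ f^{-1}$), a strong law of large numbers applied to the i.i.d.\ bounded variables $\left(h_i\circ f^{-1}(y)\right)^2$ whose mean is $\gamma\circ f^{-1}(y)$, and Theorem~\ref{t_fm1} together with the identity $\mbox{var}(G(y))=\gamma\circ f^{-1}(y)-\left\{{f_{ES}^{-1}}(y)\right\}^2$ for the subtracted term. The only differences are cosmetic: you factor $T_i^2(y)-\left(f_i^{-1}(y)\right)^2$ as a difference of squares where the paper expands and controls cross terms $\frac{1}{m}\sum_i T_i(y) f_i^{-1}(y)$, and you invoke the scalar SLLN pointwise where the paper reuses its Banach-space (uniform) law of large numbers from the continuous model.
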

Proof of this lemma is given in Section 4.2. Combining this lemma with the asymptotic normality result stated by Theorem \ref{t_fm1} yields a pointwise asymptotic confidence band for ${f_{ES}^{-1}}$.
\begin{corollary}
An asymptotic $(1-\alpha)$-level pointwise confidence band for ${f_{ES}^{-1}}$ is given by
\begin{equation*}
\left[\widehat{{f_{ES}^{-1}}}(y)-u_{1-\frac{\alpha}{2}} \sqrt{\frac{\hat{\mbox{var}}(G(y))}{m}},\widehat{{f_{ES}^{-1}}}(y)+u_{1-\frac{\alpha}{2}} \sqrt{\frac{\hat{\mbox{var}}(G(y))}{m}}\right],
\end{equation*}
where $\hat{\mbox{var}}(G(y))=\widehat{\gamma\circ f^{-1}}(y)-\left\{\widehat{{f_{ES}^{-1}}}(y)\right\}^2$ and $u_{1-\frac{\alpha}{2}}$ is the quantile of order $1-\frac{\alpha}{2}$ of the standard normal distribution.
\end{corollary}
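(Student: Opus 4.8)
The plan is to obtain the confidence band as a direct consequence of the pointwise asymptotic normality established in Theorem~\ref{t_fm1} together with the consistency of the variance estimator proved in Lemma~\ref{lcb}, the bridge between the two being Slutsky's theorem. First I would fix $y\in[f(a),f(b)]$ and specialize the functional convergence of Theorem~\ref{t_fm1} to the single coordinate $y$, which gives $\sqrt{m}(\widehat{f_{ES}^{-1}}(y)-f_{ES}^{-1}(y))\xrightarrow[m\to\infty]{\D}G(y)$, where $G(y)$ is a centered Gaussian variable with variance $\mbox{var}(G(y))=\rr(f^{-1}(y),f^{-1}(y))=\gamma\circ f^{-1}(y)-\{f_{ES}^{-1}(y)\}^2$.

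Assuming this variance is strictly positive (the nondegenerate case, which is the only one in which a band is meaningful), dividing by its square root yields $\sqrt{m}(\widehat{f_{ES}^{-1}}(y)-f_{ES}^{-1}(y))/\sqrt{\mbox{var}(G(y))}\xrightarrow{\D}N(0,1)$. Next I would invoke Lemma~\ref{lcb}: since $\hat{\mbox{var}}(G(y))\to\mbox{var}(G(y))$ almost surely as $m,n\to\infty$, it converges also in probability, and by the continuous mapping theorem $\sqrt{\mbox{var}(G(y))/\hat{\mbox{var}}(G(y))}\to1$ in probability. Slutsky's theorem then lets me replace the deterministic normalization by the estimated one without affecting the limit, so that $\sqrt{m}(\widehat{f_{ES}^{-1}}(y)-f_{ES}^{-1}(y))/\sqrt{\hat{\mbox{var}}(G(y))}\xrightarrow{\D}N(0,1)$.

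The conclusion then follows by reading off the asymptotic coverage: the limiting standard normal law gives $\Prob(|\sqrt{m}(\widehat{f_{ES}^{-1}}(y)-f_{ES}^{-1}(y))|/\sqrt{\hat{\mbox{var}}(G(y))}\le u_{1-\frac{\alpha}{2}})\to1-\alpha$, and rearranging the event inside the probability exhibits exactly the stated interval centered at $\widehat{f_{ES}^{-1}}(y)$ with half-width $u_{1-\frac{\alpha}{2}}\sqrt{\hat{\mbox{var}}(G(y))/m}$.

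The individual steps are routine, so the only genuine subtlety I would watch is the Slutsky passage: one must be careful to combine the almost sure (hence in-probability) convergence of the variance estimator with the distributional convergence of the centered statistic, and to ensure that the limiting variance does not vanish at $y$. The latter does fail at the endpoints $y=f(a)$ and $y=f(b)$, where $\mbox{var}\,H=0$ by the Proposition and the band degenerates to a single point; I would therefore state the nondegeneracy hypothesis explicitly and treat this boundary behaviour separately. Apart from this, there is no real obstacle.
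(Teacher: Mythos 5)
Your proof is correct and follows exactly the route the paper intends: the paper offers no separate argument for this corollary, stating only that it follows by combining Lemma~\ref{lcb} with the asymptotic normality in Theorem~\ref{t_fm1}, which is precisely the Slutsky-type studentization you spell out. Your explicit remark on the degeneracy at the endpoints $y=f(a)$ and $y=f(b)$ (where $\mbox{var}\,G$ vanishes since $H(\cdot,a)=a$ and $H(\cdot,b)=b$ almost surely) is a valid refinement that the paper leaves implicit.
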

Note that the construction of a simultaneous asymptotic confidence band for ${f_{ES}^{-1}}$ would require determination of the distribution of $\sup_{f(a)\leq y\leq f(b)}|G(y)|$. This, however, falls beyond the scope of this paper.

\subsection{Estimator of an individual warping function}

In a warping framework, it is necessary to estimate the mean pattern but also the individual warping functions, i.e. $(h_i^{-1})_{i\in{\{1,\dots,m\}}}$. As previously, we can not directly estimate the functions $h_i^{-1}$ only the functions $\phi\circ h_i^{-1}$.\\ 
\indent   Let $i_0\in{\{1,\dots,m\}}$. Now we want to compute for all $i\neq i_0$, $T_i^\star(t)=f_i^{-1}\circ f_{i_0}(t)$. For this, define
\begin{equation}\label{deft2}
j_0(t)=\arg\min_{j \in{\{1,\dots,n \}}}\left|t_{i_0j}-t\right|.
\end{equation}
This point is the obervation time for the $i_0$ curve which is the closest to $t$. Note that the index $j_0(t)$ depends on $i_0$ but, for sake of simplicity we drop this index in the notations.
Then  $ \forall t\in[a,b]$, and $\forall i\in{\{1,\dots,m\}}\smallsetminus i_0$, compute
\begin{equation}\label{deft1}
T_i(t)=\arg\min_{t_j\in{\{t_{i1},\dots,t_{in}\}}}\left|Y_{ij}-Y_{i_0j_0(t)}\right|,
\end{equation}
an estimate of $T_i^\star$. Then for a fixed $i_0$, noting that $T^\star_i=h_i \circ h_{i_0}^{-1}$, we can see that an empirical estimator of each individual warping function $\phi \circ h_{i_0}^{-1}$ is given by
\begin{equation}\label{e_h}
\widehat{\phi\circ h_{i_0}^{-1}}(t):=\frac{1}{m-1}\sum_{\begin{subarray}{c}i=1\\i\neq i_0\end{subarray}}^{m}T_i(t).
\end{equation}
The following theorem asserts consistency and asymptotic normality of this estimator.
\begin{theorem}\label{t_h}
Under assumption \eqref{d_h},  
$$\left\Vert\widehat{\phi\circ h_{i_0}^{-1}}-\phi\circ h_{i_0}^{-1}\right\Vert_\infty\xrightarrow[m,n\to\infty]{as}0.$$
Let $n=m^{\frac{1}{2}+\alpha}$ (with $\alpha>0$) and assume that \eqref{h_1} and \eqref{h_3} hold. Then $\sqrt{m}(\widehat{\phi\circ h_{i_0}^{-1}}-\phi\circ h_{i_0}^{-1})$ converges weakly to a centered Gaussian process $Z$,
$$\sqrt{m}(\widehat{\phi\circ h_{i_0}^{-1}}-\phi\circ h_{i_0}^{-1})\xrightarrow[m\to\infty]{\D}Z,$$ with covariance function defined for all $(s,t)\in[a,b]^2$ by
$${\rm cov}(Z(s),Z(t))=\rr\left(h_{i_0}^{-1}(s),h_{i_0}^{-1}(t)\right).$$
\end{theorem}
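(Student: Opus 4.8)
The plan is to follow the scheme of the proof of Theorem~\ref{t_fm1}, replacing the fixed abscissa $y$ by the value $f_{i_0}(t)$ read off the reference curve and the averaged quantity $f_i^{-1}(y)$ by $T_i^\star(t):=f_i^{-1}\circ f_{i_0}(t)=h_i\circ h_{i_0}^{-1}(t)$. The whole argument is carried out conditionally on the realization $h_{i_0}$: once $h_{i_0}$ is frozen, the variables $T_i^\star(t)=h_i(h_{i_0}^{-1}(t))$, $i\neq i_0$, are i.i.d.\ in $i$ and, being independent of $h_{i_0}$, are copies of $H$ evaluated at the deterministic point $h_{i_0}^{-1}(t)$, so that $\bE\,T_i^\star(t)=\phi(h_{i_0}^{-1}(t))$ and $\mathrm{cov}(T_i^\star(s),T_i^\star(t))=\rr(h_{i_0}^{-1}(s),h_{i_0}^{-1}(t))$.

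First I would introduce the oracle average $\frac{1}{m-1}\sum_{i\neq i_0}T_i^\star(t)$ and split $\widehat{\phi\circ h_{i_0}^{-1}}(t)-\phi\circ h_{i_0}^{-1}(t)$ into a discretization term $\frac{1}{m-1}\sum_{i\neq i_0}\big(T_i(t)-T_i^\star(t)\big)$ and an empirical-mean term $\frac{1}{m-1}\sum_{i\neq i_0}\big(T_i^\star(t)-\phi\circ h_{i_0}^{-1}(t)\big)$. For the discretization term, the key observation is that $t_{i_0j_0(t)}$ lies within the grid mesh, of order $1/n$, of $t$, and that, by monotonicity of the curves, the grid time $T_i(t)$ whose value $Y_{ij}$ best matches $Y_{i_0j_0(t)}$ is within one mesh of $f_i^{-1}(Y_{i_0j_0(t)})$; composing with the inverse maps and invoking the Lipschitz bound~\eqref{h_3} on $H^{-1}$ (which plays here the role played by~\eqref{h_2} in Theorem~\ref{t_fm1}) yields a uniform bound $\sup_{t}\big|T_i(t)-T_i^\star(t)\big|=O(1/n)$, hence the discretization term is $O(1/n)$ and vanishes almost surely.

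For the empirical-mean term, the conditional strong law of large numbers gives pointwise almost sure convergence to $0$; since each $T_i^\star$ and the limit $\phi\circ h_{i_0}^{-1}$ are continuous and increasing, I would upgrade pointwise to uniform convergence by a P\'olya/Dini monotonicity argument, exactly as for Theorem~\ref{t_fm1}. Combining the two terms gives the uniform consistency statement.

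For the asymptotic normality, choosing $n=m^{1/2+\alpha}$ makes the rescaled discretization term $\sqrt m\,O(1/n)=O(m^{-\alpha})$ vanish uniformly, so it remains to establish a functional central limit theorem for the centered i.i.d.\ processes $t\mapsto T_i^\star(t)-\phi\circ h_{i_0}^{-1}(t)=h_i(h_{i_0}^{-1}(t))-\phi(h_{i_0}^{-1}(t))$. Finite-dimensional convergence to a centered Gaussian vector with covariance $\rr(h_{i_0}^{-1}(s),h_{i_0}^{-1}(t))$ follows from the ordinary multivariate CLT, and tightness in $\mathcal C([a,b])$ follows from the second-moment increment bound: combining~\eqref{h_1} applied at the points $h_{i_0}^{-1}(s),h_{i_0}^{-1}(t)$ with the Lipschitz bound~\eqref{h_3} gives $\bE\big|[h_i(h_{i_0}^{-1}(s))-\phi(h_{i_0}^{-1}(s))]-[h_i(h_{i_0}^{-1}(t))-\phi(h_{i_0}^{-1}(t))]\big|^2\le C_1C_3|s-t|^2$, the modulus needed for the standard tightness criterion. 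The main obstacle is precisely this joint control: proving tightness of the empirical process while simultaneously forcing the discretization error to be negligible at the $\sqrt m$ scale uniformly in $t$, and it is exactly the calibration $n=m^{1/2+\alpha}$ that reconciles the two requirements.
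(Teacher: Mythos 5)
Your conditioning on $h_{i_0}$, the oracle decomposition with $T_i^\star=f_i^{-1}\circ f_{i_0}=h_i\circ h_{i_0}^{-1}$, and your treatment of the oracle empirical process (multivariate CLT for the finite-dimensional laws, plus the increment bound $\bE\left|\,\cdot\,\right|^2\leq C_1C_3|s-t|^2$ obtained by chaining \eqref{h_1} with \eqref{h_3}) coincide with the paper's continuous-model result (Theorem \ref{t_h_continu}); your P\'olya/Dini upgrade from pointwise to uniform consistency is an acceptable variant of the paper's Banach-space strong law. The gap is in the discretization term.

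Your claim that \eqref{h_3} yields $\sup_t\left|T_i(t)-T_i^\star(t)\right|=O(1/n)$ is false. Monotonicity only gives $\left|T_i(t)-f_i^{-1}\circ f_{i_0}(t_{j_0})\right|\leq \frac{1}{n}$ with $|t_{j_0}-t|\leq\frac{1}{n}$, so the residual error is the increment $\left|h_i\circ h_{i_0}^{-1}(t_{j_0})-h_i\circ h_{i_0}^{-1}(t)\right|$. Assumption \eqref{h_3} is a Lipschitz bound on $H^{-1}$: it controls the \emph{inner} map, giving $\left|h_{i_0}^{-1}(t_{j_0})-h_{i_0}^{-1}(t)\right|\leq\sqrt{C_3}/n$, but for the \emph{outer} map $h_i$ it yields only the reverse inequality $\left|h_i(u)-h_i(v)\right|\geq |u-v|/\sqrt{C_3}$, i.e.\ a lower bound on increments of $h_i$, not an upper bound. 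Since $H$ is only assumed continuous and increasing, $h_i$ may be arbitrarily steep, and the increment of $h_i\circ h_{i_0}^{-1}$ over one mesh is a genuinely random quantity with no deterministic $O(1/n)$ majorant; contrast this with Theorem \ref{t_fm1}, where $T_i(y)$ approximates $f_i^{-1}(y)$ directly on the time grid, so the error there really is a pure grid error of size $1/n$ — the analogy you invoke breaks exactly at this point. This is the difficulty the paper's proof is organized around: it introduces the modulus $K_H(\delta)=\sup_{|s-t|\leq\delta}\left|H_1\circ H_2^{-1}(s)-H_1\circ H_2^{-1}(t)\right|$ and its conditional moments $L_H^1(h_{i_0},\cdot)$, $L_H^2(h_{i_0},\cdot)$, right-continuous at $0$, and controls the discretization error \emph{on average over $i$}: for consistency via Lemma \ref{lemtech} (monotonicity of $K_H$ to replace $1/n$ by a fixed $\eta_\epsilon$, then the law of large numbers applied to the i.i.d.\ variables $\tilde K_H^i(\eta_\epsilon)$), and at the $\sqrt m$ scale by showing that $Z_m=\frac{\sqrt m}{m-1}\sum_{i\neq i_0}\tilde K_H^i(1/n)$ is negligible under $n=m^{\frac{1}{2}+\alpha}$, using the moments of $K_H$ rather than any deterministic $O(m^{-\alpha})$ bound. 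Without this averaged-modulus control (or an additional assumption making $H$ itself uniformly Lipschitz, which the paper does not impose), both your almost-sure consistency claim for the discretization term and its negligibility at scale $\sqrt m$ are unproven, so the proposal does not establish either assertion of the theorem.
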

We may also compute confidence bands for $\phi\circ h_{i_0}^{-1}$, based on a consistent estimator of $\mbox{var}(Z(t))=\gamma\circ h_{i_0}^{-1}(t)-\{\phi\circ h_{i_0}^{-1}(t)\}^2$.
\begin{lemma}\label{lcb2}
Let $t\in[a,b]$. Let $\widehat{\gamma\circ h_{i_0}^{-1}}(t)=\frac{1}{m}\sum_{i=1}^{m}T_i^2(t)$, with $T_i(.)$ defined by (\ref{deft1}) and (\ref{deft2}). Then
\begin{equation*}
\widehat{\gamma\circ h_{i_0}^{-1}}(t)-\left\{\widehat{\phi\circ h_{i_0}^{-1}}(t)\right\}^2 \xrightarrow[m,n\to\infty]{as}\mbox{var}(Z(t)).
\end{equation*}
\end{lemma}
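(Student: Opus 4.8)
The plan is to establish the two almost sure limits
$$\widehat{\gamma\circ h_{i_0}^{-1}}(t)\xrightarrow[m,n\to\infty]{as}\gamma\circ h_{i_0}^{-1}(t)\qquad\text{and}\qquad\widehat{\phi\circ h_{i_0}^{-1}}(t)\xrightarrow[m,n\to\infty]{as}\phi\circ h_{i_0}^{-1}(t)$$
separately, and then to combine them through the continuity of $x\mapsto x^2$ and the identity $\mbox{var}(Z(t))=\gamma\circ h_{i_0}^{-1}(t)-\{\phi\circ h_{i_0}^{-1}(t)\}^2$ recalled before the statement. The second limit is precisely the consistency part of Theorem \ref{t_h}, so no new work is required there, and the whole argument reduces to the convergence of the empirical second moment $\widehat{\gamma\circ h_{i_0}^{-1}}(t)=\frac1m\sum_{i=1}^m T_i^2(t)$. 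This is the exact analogue, in the individual--warping setting, of Lemma \ref{lcb}, and I would follow the same three steps, replacing the roles of $f^{-1}$ and $\phi\circ f^{-1}$ by $h_{i_0}^{-1}$ and $\phi\circ h_{i_0}^{-1}$.

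First, since every $T_i(t)$ lies in the compact interval $[a,b]$, the term $i=i_0$ contributes at most $(|a|\vee|b|)^2/m$ to $\widehat{\gamma\circ h_{i_0}^{-1}}(t)$ and is negligible, so it suffices to control $\frac1m\sum_{i\neq i_0}T_i^2(t)$. Second, I would replace each $T_i(t)$ by its deterministic target $T_i^\star(t)=h_i\circ h_{i_0}^{-1}(t)$. Because each $f_i$ is increasing, the grid point selected by \eqref{deft1} and the exact preimage $f_i^{-1}(Y_{i_0j_0(t)})=h_i\circ h_{i_0}^{-1}(t_{i_0j_0(t)})$ lie in a common grid cell, whence $|T_i(t)-h_i\circ h_{i_0}^{-1}(t_{i_0j_0(t)})|$ is at most the mesh $(b-a)/(n-1)$, uniformly in $i$; the residual gap $|h_i\circ h_{i_0}^{-1}(t_{i_0j_0(t)})-T_i^\star(t)|$ is controlled, after conditioning on $h_{i_0}$, by \eqref{h_3}, \eqref{h_1} and the continuity of $\phi$, exactly as in Theorem \ref{t_h}. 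Writing
$$\frac1m\sum_{i\neq i_0}\bigl(T_i^2(t)-(T_i^\star(t))^2\bigr)=\frac1m\sum_{i\neq i_0}\bigl(T_i(t)-T_i^\star(t)\bigr)\bigl(T_i(t)+T_i^\star(t)\bigr)$$
and bounding the second factor by $2(|a|\vee|b|)$, this difference is at most $2(|a|\vee|b|)\cdot\frac1m\sum_{i\neq i_0}|T_i(t)-T_i^\star(t)|$, which tends to $0$.

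Third, it remains to identify the limit of $\frac1m\sum_{i\neq i_0}(T_i^\star(t))^2$. Conditioning on the realization $h_{i_0}$, the point $s_0:=h_{i_0}^{-1}(t)$ is fixed and the variables $T_i^\star(t)=h_i(s_0)$, $i\neq i_0$, are i.i.d.\ bounded copies of $H(s_0)$, independent of $h_{i_0}$; the strong law of large numbers applied to $(h_i(s_0))^2$ then gives, for almost every $h_{i_0}$,
$$\frac1m\sum_{i\neq i_0}(T_i^\star(t))^2\xrightarrow[m\to\infty]{as}\bE\bigl[H(s_0)^2\bigr]=\gamma(s_0)=\gamma\circ h_{i_0}^{-1}(t),$$
by the definition of the second order moment $\gamma$. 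Chaining the three steps gives the first limit, and combining with Theorem \ref{t_h} closes the proof.

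The step I expect to be the main obstacle is the uniform treatment of the replacement of $T_i(t)$ by $T_i^\star(t)$ as the joint limit $m,n\to\infty$ is taken, namely showing $\frac1m\sum_{i\neq i_0}|T_i(t)-T_i^\star(t)|\to0$ almost surely. This needs an increment bound of the form $\bE|H(s')-H(s)|\leq\sqrt{C_1}\,|s'-s|+|\phi(s')-\phi(s)|$ that is uniform in the matching points and vanishes with the mesh, which is exactly the estimate already produced in the proof of Theorem \ref{t_h} from \eqref{h_1}, \eqref{h_3} and the uniform continuity of $\phi$ on $[a,b]$. Everything else rests only on the boundedness of the $T_i(t)$ in $[a,b]$, on which squaring acts as a Lipschitz map, so that passing from the empirical mean of the $T_i(t)$ to that of the $T_i^2(t)$ costs nothing beyond what Theorem \ref{t_h} already provides.
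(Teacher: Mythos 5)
Your overall strategy coincides with the paper's: reduce the lemma to almost sure convergence of the empirical second moment, replace each discrete matching $T_i(t)$ by its continuous counterpart $h_i\circ h_{i_0}^{-1}(t)=f_i^{-1}\circ f_{i_0}(t)$ at the cost of the grid mesh plus an increment of $h_i\circ h_{i_0}^{-1}$ over a window of width $1/n$, identify the limit by the strong law of large numbers applied (conditionally on $h_{i_0}$) to the i.i.d.\ bounded variables $\left(h_i\circ h_{i_0}^{-1}(t)\right)^2$, and combine with the consistency part of Theorem \ref{t_h}. Your difference-of-squares factorization plays the same role as the cross term $\frac{1}{m-1}\sum_{i\neq i_0}T_i\cdot f_i^{-1}\circ f_{i_0}(t)$ in the paper (which mimics Lemma \ref{lcb}); this is equivalent bookkeeping.

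The step you do not justify correctly is precisely the one you single out as the main obstacle. You propose to control $\frac{1}{m}\sum_{i\neq i_0}\left|T_i(t)-h_i\circ h_{i_0}^{-1}(t)\right|$ via the in-expectation increment bound $\bE\left|H(s')-H(s)\right|\leq\sqrt{C_1}\,|s'-s|+|\phi(s')-\phi(s)|$ derived from \eqref{h_1} and \eqref{h_3}. Two problems. First, a bound on expectations does not by itself yield \emph{almost sure} convergence of the empirical average under the joint limit $m,n\to\infty$: one needs to dominate the random increments, uniformly in $n$ large, by a single i.i.d.\ sequence to which the SLLN applies. Second, you attribute this estimate to the proof of Theorem \ref{t_h}, but the consistency part of that proof never uses \eqref{h_1} or \eqref{h_3}; what it actually uses --- and what fills the gap --- is the modulus-of-continuity machinery: the averaged variables $\tilde K_H^i\left(\frac{1}{n}\right)$, monotonicity of $K_H(\cdot)$ (so that $\tilde K_H^i\left(\frac{1}{n}\right)\leq\tilde K_H^i\left(\eta_\epsilon\right)$ for $n$ large), the SLLN, and right-continuity of $L_H^1(h_{i_0},\cdot)$ and $L_H^2(h_{i_0},\cdot)$ at $0$, i.e.\ the argument of Lemma \ref{lemtech}. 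This is exactly how the paper's proof of Lemma \ref{lcb2} proceeds, and it has the benefit that the lemma then holds under assumption \eqref{d_h} alone, without invoking \eqref{h_1} and \eqref{h_3}, which the statement does not assume. (Alternatively, monotonicity of the $h_i$ gives the domination $\sup_{|u-s|\leq\delta}|h_i(u)-h_i(s)|\leq h_i\left((s+\delta)\wedge b\right)-h_i\left((s-\delta)\vee a\right)$, to which the SLLN and continuity of $\phi$ apply; this likewise avoids \eqref{h_1} and \eqref{h_3}.) With this repair, your argument matches the paper's proof.
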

Proof of this lemma relies on the same arguments as proof of Lemma \ref{lcb} and is  outlined in Section 4.2. A pointwise asymptotic confidence band for $\phi\circ h_{i_0}^{-1}$ is now given by
\begin{corollary}
An asymptotic $(1-\alpha)$-level pointwise confidence band for $\phi\circ h_{i_0}^{-1}$ is given by
\begin{equation*}
\left[\widehat{\phi\circ h_{i_0}^{-1}}(t)-u_{1-\frac{\alpha}{2}} \sqrt{\frac{\hat{\mbox{var}}(Z(t))}{m}},\widehat{\phi\circ h_{i_0}^{-1}}(t)+u_{1-\frac{\alpha}{2}} \sqrt{\frac{\hat{\mbox{var}}(Z(t))}{m}}\right],
\end{equation*}
where $\hat{\mbox{var}}(Z(t))=\widehat{\gamma\circ h_{i_0}^{-1}}(t)-\left\{\widehat{\phi\circ h_{i_0}^{-1}}(t)\right\}^2$.
\end{corollary}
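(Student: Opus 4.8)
The plan is to combine the pointwise asymptotic normality furnished by Theorem~\ref{t_h} with the consistency of the variance estimator furnished by Lemma~\ref{lcb2}, through a standard Slutsky argument. Fix an interior point $t\in(a,b)$ for which $\mbox{var}(Z(t))>0$. Theorem~\ref{t_h} gives weak convergence of the whole process $\sqrt{m}(\widehat{\phi\circ h_{i_0}^{-1}}-\phi\circ h_{i_0}^{-1})$ to the centered Gaussian process $Z$; reading off the one-dimensional marginal at $t$ yields
$$\sqrt{m}\left(\widehat{\phi\circ h_{i_0}^{-1}}(t)-\phi\circ h_{i_0}^{-1}(t)\right)\xrightarrow[m\to\infty]{\D}\mathcal{N}\left(0,\mbox{var}(Z(t))\right),$$
where $\mbox{var}(Z(t))=\rr(h_{i_0}^{-1}(t),h_{i_0}^{-1}(t))=\gamma\circ h_{i_0}^{-1}(t)-\{\phi\circ h_{i_0}^{-1}(t)\}^2$.

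Next I would invoke Lemma~\ref{lcb2}, which gives $\hat{\mbox{var}}(Z(t))\xrightarrow[m,n\to\infty]{as}\mbox{var}(Z(t))$, hence also convergence in probability. Since $\mbox{var}(Z(t))>0$, the map $x\mapsto\sqrt{\mbox{var}(Z(t))/x}$ is continuous at the limit, so the continuous mapping theorem gives $\sqrt{\mbox{var}(Z(t))/\hat{\mbox{var}}(Z(t))}\xrightarrow{\Prob}1$. Writing the studentized quantity as the product of the standardized statistic and this ratio and applying Slutsky's lemma yields
$$\frac{\sqrt{m}\left(\widehat{\phi\circ h_{i_0}^{-1}}(t)-\phi\circ h_{i_0}^{-1}(t)\right)}{\sqrt{\hat{\mbox{var}}(Z(t))}}\xrightarrow[m\to\infty]{\D}\mathcal{N}(0,1).$$

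It then remains only to invert the event. Because the limit law is standard normal with a continuous distribution function, the probability that the studentized statistic lies in $[-u_{1-\frac{\alpha}{2}},u_{1-\frac{\alpha}{2}}]$ converges to $1-\alpha$; solving the two resulting inequalities for $\phi\circ h_{i_0}^{-1}(t)$ reproduces exactly the announced interval, so the coverage of the band tends to the nominal level $1-\alpha$. This bookkeeping is routine, and I expect the only genuine subtlety to be the behaviour at the endpoints: a direct computation using $h_{i_0}^{-1}(a)=a$, $\gamma(a)=a^2$, $\phi(a)=a$ (and symmetrically at $b$) shows $\mbox{var}(Z(a))=\mbox{var}(Z(b))=0$, so the studentization degenerates there. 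The hard part is therefore to make clear that the statement is to be read pointwise at each interior $t$ where the asymptotic variance is strictly positive, the boundary giving a trivial interval that collapses onto the true value.
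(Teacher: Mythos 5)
Your proof is correct and follows exactly the route the paper intends: the paper gives no explicit proof of this corollary, stating only that it follows from combining Lemma~\ref{lcb2} with the asymptotic normality in Theorem~\ref{t_h}, which is precisely your Slutsky argument (marginal CLT at $t$, consistency of $\hat{\mbox{var}}(Z(t))$, continuous mapping, and inversion of the pivotal event). Your additional observation that the variance degenerates at the endpoints $a$ and $b$, so the statement must be read at points where $\mbox{var}(Z(t))>0$, is a useful precision the paper leaves implicit.
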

\section{Extensions to the general case} \label{s:cmieux}
In the preceding part, we studied the asymptotic behaviour of a new warping methodology. However, drastic restrictions over the class of functions are needed, mostly dealing with monocity of the observed functions and on a non noisy model. In this part, we get rid of such asumptions and provide a pratical way of handling  more realistic observations.\vskip .1in
First, note that the assumptions $H(a)\stackrel{a.s}{=}a$ et $H(b)\stackrel{a.s}{=}b$ can be weakened into the following assumptions
\begin{itemize}
\item [ii$^\prime$)] $H^{-1}(\cdot,a)$ are $H^{-1}(\cdot,b)$ random variables compactly supported such that
$$\sup_{w\in\Omega}H^{-1}(w,a)\stackrel{a.s}{<}
\inf_{w\in\Omega}H^{-1}(w,b).$$
\end{itemize}
Then, we focus on the other assumptions.
\subsection{Breaking monotonicity}
The main idea is to build a transformation $\mathcal{G}$ which turns a nonmonotone function into a monotone function while preserving the warping functions. For sake of simplicity, the observation times will be taken equal for all the curves, hence $t_{ij}$ will be denoted $t_j$. Hence, the observations
$$Y_{ij}=f\circ f_i^{-1}(t_j),\:i=1,\dots,m,\:j=0,\cdots,n,$$
are transformed into 
\begin{equation}\label{modelg}
Z_{ij}=\mathcal{G}(f)\circ h_i^{-1}(t_j):=g\circ h_i^{-1}(t_j),\:i=1,\dots,m,\:j=0,\cdots,n,
\end{equation}
where $g$ is a monotone function. So, estimating the warping process of the monotonized model can be used to  estimate the real warping functions, and then align the original observations $Y_{ij}$ to their structural mean.\vskip .1in
For this, consider a nonmonotone function $f:[a,b]\to {\{1,\dots,m\}}$ and let 
$a=s_0<s_1<\dots<s_r<s_{r+1}=b$
be the different variational change points, in the sense  that $\forall k\in\{0,\dots,r-1\}$,
\begin{equation}\label{assumption_f2}
\begin{array}{c}
\forall(t_1,t_2)\in]s_k,s_{k+1}[,\:\forall(t_3,t_4)\in]s_{k+1},s_{k+2}[,\\
t_1<t_2\mbox{ et }t_3<t_4\Rightarrow\left(f(t_1)-f(t_2)\right)
\left(f(t_3)-f(t_4)\right)<0.
\end{array}
\end{equation}
So, consider functional warping over the subset
$$\mathcal{F}=\{f:[a,b]\to {\{1,\dots,m\}} \subset\mathbb{R}\mbox{ such that~\eqref{assumption_f2} holds }\}.$$
Let $\pi: ]a,b[\smallsetminus\{s_1,\dots,s_r\} \rightarrow \{-1,1\}$ be a tool function which indicates whether, around a given point $t$, the function is $f$ increasing or decreasing, defined by 
$$
\pi:s_{l(t)}<t<s_{l(t)+1}\mapsto \pi(t)=
\left\{\begin{array}{rcl}
-1&\mbox{ si }&s_{l(t)}-s_{l(t)+1}>0,\\
1&\mbox{ si }&s_{l(t)}-s_{l(t)+1}<0,
\end{array}\right.$$
with $l(t)\in\{0,\dots,r\}$.
\begin{description}
\item[Monotonizing Operator] 
\end{description}
Define for all $f\in \mathcal{F}$ the operator $\mathcal{G}(.,f):\: t\in]a,b[\smallsetminus\{s_0,\dots,s_{r+1}\} \to \mathcal{G}(t,f)$  by
$$\mathcal{G}(t,f)=
f(t)\pi(t)-\sum_{k=0}^{r}\pi(t)f(s_k)
{\bf 1}_{]s_k,s_{k+1}[}(t)+f(s_0)+\sum_{k=1}^{r}|f(s_{k-1})-f(s_k)|
{\bf 1}_{]s_k,b[}(t),$$
and, for all $k\in\{0,\dots,r+1\}$, 
$$\mathcal{G}(s_k,f)=f(a)+\sum_{l=1}^{k}\left|f(s_{l-1})-f(s_l)\right|,$$
with the notation $\sum_{l=1}^{0}\left|f(s_{l-1})-f(s_l)\right|=0$.\vskip .1in
By construction, it is obvious that $t \to \mathcal{G}(t,f)$ is strictly increasing.
Moreover, the following proposition proves that the warping functions remain unchanged 
\begin{proposition}\label{proposition_g2}
Consider $f\in\mathcal{F}$ and warping functions 
$h_i, \:{i=1,\dots,m}$. Set $\forall i=1,\dots,m,\: \mathcal{G}(.,f_i)=g_i(.)$. We have 
 $g_i=g\circ h_i^{-1}.$
\end{proposition}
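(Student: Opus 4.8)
The strategy is to show that the two functions $g_i = \mathcal{G}(\cdot, f_i)$ and $g \circ h_i^{-1}$ are produced by one and the same expression, by reading off the defining formula of $\mathcal{G}$ after the change of variable $u = h_i^{-1}(t)$. The whole argument rests on one elementary observation: since $h_i$ is an increasing continuous bijection of $[a,b]$ (in particular $h_i(a)=a$ and $h_i(b)=b$), composing $f$ with $h_i^{-1}$ neither creates, destroys, nor reorders the turning points, and it does not reverse the local direction of variation.

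First I would identify the variational change points of $f_i$. Because $f_i = f\circ h_i^{-1}$ and $h_i^{-1}$ is increasing, on any interval where $f$ is monotone $f_i$ is monotone in the same direction; hence the change points of $f_i$ are exactly $s_k^{(i)} := h_i(s_k)$, inheriting the ordering $a = h_i(s_0) < h_i(s_1) < \dots < h_i(s_{r+1}) = b$, so that $f_i \in \mathcal{F}$ with the same number $r$ of change points and $\mathcal{G}(\cdot,f_i)$ is well defined. Three transfer identities then follow at once: the sign indicator of $f_i$ satisfies $\pi_i(t) = \pi(h_i^{-1}(t))$; the values at the change points are unchanged, $f_i(s_k^{(i)}) = f(h_i^{-1}(h_i(s_k))) = f(s_k)$ (in particular $f_i(a) = f(a)$); and, because $h_i$ is increasing and fixes $b$, each interval indicator pulls back cleanly, $\mathbf{1}_{]s_k^{(i)}, s_{k+1}^{(i)}[}(t) = \mathbf{1}_{]s_k, s_{k+1}[}(h_i^{-1}(t))$ and $\mathbf{1}_{]s_k^{(i)}, b[}(t) = \mathbf{1}_{]s_k, b[}(h_i^{-1}(t))$.

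Next I would substitute these identities into the definition of $\mathcal{G}(t, f_i)$ written with respect to the change points $s_k^{(i)}$. Setting $u = h_i^{-1}(t)$ and replacing $f_i(t)$ by $f(u)$, $\pi_i(t)$ by $\pi(u)$, each $f_i(s_k^{(i)})$ by $f(s_k)$, and each indicator by its pullback, every term of the formula becomes the corresponding term of $\mathcal{G}(u, f)$; thus for $t \notin \{s_0^{(i)}, \dots, s_{r+1}^{(i)}\}$ one obtains $g_i(t) = \mathcal{G}(h_i^{-1}(t), f) = g(h_i^{-1}(t))$. The change points themselves are handled with the second defining formula: $\mathcal{G}(s_k^{(i)}, f_i) = f_i(a) + \sum_{l=1}^{k} |f_i(s_{l-1}^{(i)}) - f_i(s_l^{(i)})| = f(a) + \sum_{l=1}^{k} |f(s_{l-1}) - f(s_l)| = g(s_k) = g(h_i^{-1}(s_k^{(i)}))$, which matches. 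Combining the two cases yields $g_i = g \circ h_i^{-1}$ on all of $[a,b]$.

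The term-by-term substitution is routine; the only genuine point requiring care, and the step I would treat as the crux, is the first one, namely verifying that an increasing reparametrization carries the change-point structure of $f$ faithfully onto that of $f_i$, preserving both the left-to-right order of the $s_k$ and the increasing/decreasing label on each interval. This is precisely where the monotonicity of $h_i$ (as opposed to an arbitrary homeomorphism) is indispensable, since a decreasing reparametrization would simultaneously reverse the order of the turning points and flip every value of $\pi$, so that the two sides would no longer coincide.
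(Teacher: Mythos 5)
Your proof follows essentially the same route as the paper's: first show $f_i\in\mathcal{F}$ with change points $s_k^i=h_i(s_k)$ (the key use of strict monotonicity of $h_i^{-1}$), then substitute the transfer identities $f_i(t)=f(h_i^{-1}(t))$, $\pi_i(t)=\pi(h_i^{-1}(t))$, $f_i(s_k^i)=f(s_k)$ and the indicator pullbacks $\mathbf{1}_{]s_k^i,s_{k+1}^i[}(t)=\mathbf{1}_{]s_k,s_{k+1}[}\left(h_i^{-1}(t)\right)$ into the defining formula of $\mathcal{G}(\cdot,f_i)$ to conclude $g_i=g\circ h_i^{-1}$. Your explicit check at the change points themselves, via the second defining formula of $\mathcal{G}$, is a small completeness improvement over the paper, which treats only $t\notin\{s_1^i,\dots,s_r^i\}$.
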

\begin{proof}
For $i\in\{1,\dots,m\}$, let us prove that  $f_i \in\mathcal{F}$. Using \eqref{assumption_f2}, consider the change points  $\left(s_k\right)_{k=1,\dots,r}$ and set $s_k^i=h_i(s_k).$
For $k\in\{0,\dots,r-1\}$, consider
$$\left.(t_1,t_2)\in\left]s_k^i,s_{k+1}^i\right[,\:
(t_3,t_4)\in\left]s_{k+1}^i,s_{k+2}^i\right[\:\right/
\:t_1<t_2\mbox{ et }t_3<t_4,$$
then,
$$\left(f_i(t_1)-f_i(t_2)\right)\left(f_i(t_3)-f_i(t_4)\right)
=\left(f\circ h_i^{-1}(t_1)-f\circ h_i^{-1}(t_2)\right)
\left(f\circ h_i^{-1}(t_3)-f\circ h_i^{-1}(t_4)\right).$$
Since $h_i^{-1}$ is strictly increasing, we get $s_k<h_i^{-1}(t_1)<h_i^{-1}(t_2)<s_{k+1}$
and
$s_{k+1}<h_i^{-1}(t_3)<h_i^{-1}(t_3)<s_{k+2}.$ Hence 
$$\left(f_i(t_1)-f_i(t_2)\right)\left(f_i(t_3)-f_i(t_4)\right)<0,$$ which yelds that $f_i\in\mathcal{F}$. So define $g_i=\mathcal{G}(f_i)$. For all $t\in]a,b[\smallsetminus\{s_1^i,\dots,s_r^i\}$, we get
$$g_i(t)=f_i(t)\Pi(t,f_i)-\sum_{k=0}^{r}\Pi(t,f_i)f_i(s_k^i)
{\bf 1}_{]s_k^i,s_{k+1}^i[}(t)+f_i(s_0)+\sum_{k=1}^{r}
\left|f_i(s_{k-1}^i)-f_i(s_k^i)\right|{\bf 1}_{]s_k^i,b[}(t),$$
But
\begin{align*}
f_i(t)&=f\circ h_i^{-1}(t),\\
\Pi(t,f_i)&=\Pi(t,f\circ h_i^{-1})=\Pi(h_i^{-1}(t),f),\\
{\bf 1}_{]s_k^i,s_{k+1}^i[}(t)
&={\bf 1}_{]h_i(s_k),h_i(s_{k+1})[}(t)
={\bf 1}_{]s_k,s_{k+1}[}\left(h_i^{-1}(t)\right),
\end{align*}
which implies that
$$g_i=g\circ h_i^{-1}.$$
\end{proof}
The discretization implies however that the $Z_{ij}$ can not be computed directly since the functions $f_i,\:i=1,\dots,m,$ are known on the grid $t_j,\:{j=0,\dots,n}$, while the values,
$s_k$  and $s_k^i$ are unknown. So consider estimates of $Z_{ij}$ defined as follows
$$\tilde{Z}_{i0}=Y_{i0}$$
\begin{equation}\label{definition_Z}
\forall j\in\{1,\dots,n\},\:
\tilde{Z}_{ij}=\tilde{Z}_{ij-1}+\left|Y_{ij}-Y_{ij-1}\right|.
\end{equation}
The following proposition proves the consistency of such estimation procedure.
\begin{proposition}\label{proposition_Z}
For $f\in\mathcal{F}$,  $i\in\{1,\dots,m\}$ and $t\in[a,b]$, define a sequence $j(n)$ such that $\frac{j(n)}{n}\xrightarrow[n\to+\infty]{}t.$
Then,
$$\tilde{Z}_{ij(n)}-Z_{ij(n)}\xrightarrow[n\to+\infty]{a.s}0.$$
\end{proposition}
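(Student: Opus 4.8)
The plan is to recognise both $\tilde Z_{ij(n)}$ and $Z_{ij(n)}$ as accumulated (total) variations of the curve $f_i=f\circ h_i^{-1}$, and to control the gap between the discrete variation read off the grid and the true total variation. Throughout I fix a realization for which $h_i(\cdot)$ is continuous and increasing (which holds almost surely under assumptions i) and ii)); the argument is then purely deterministic, and the conclusion is almost sure. First, unfolding the recursion \eqref{definition_Z} gives
$$\tilde Z_{ij}=Y_{i0}+\sum_{l=1}^{j}\left|Y_{il}-Y_{il-1}\right|
=f_i(a)+\sum_{l=1}^{j}\left|f_i(t_l)-f_i(t_{l-1})\right|,$$
which is exactly the variation of $f_i$ sampled along the partition $\{t_0,\dots,t_j\}$ of $[a,t_j]$, shifted by $f_i(a)$.

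Next I would identify the target. By Proposition~\ref{proposition_g2}, $f_i\in\mathcal{F}$ with change points $s_k^i=h_i(s_k)$, and $Z_{ij}=g_i(t_j)=\mathcal{G}(t_j,f_i)$. Unwinding the definition of the monotonizing operator shows that on each monotonicity interval $\pi(t)\,(f_i(t)-f_i(s_{l(t)}^i))=|f_i(t)-f_i(s_{l(t)}^i)|$, and that the finite sum of the jumps $\sum_{k}|f_i(s_{k-1}^i)-f_i(s_k^i)|$ is precisely the total variation of $f_i$ accumulated up to the current change point; combining these one obtains
$$\mathcal{G}(t,f_i)=f_i(a)+\mathrm{V}\!\left(f_i;[a,t]\right),$$
the accumulated total variation of $f_i$ on $[a,t]$. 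Hence $Z_{ij}=f_i(a)+\mathrm{V}(f_i;[a,t_j])$, and by additivity of the total variation over adjacent intervals,
$$\tilde Z_{ij}-Z_{ij}
=-\sum_{l=1}^{j}\Big(\mathrm{V}\!\left(f_i;[t_{l-1},t_l]\right)-\left|f_i(t_l)-f_i(t_{l-1})\right|\Big),$$
a sum of nonnegative terms measuring how much the chord underestimates the variation on each grid cell.

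Finally I would bound this defect. On any cell $[t_{l-1},t_l]$ on which $f_i$ is monotone the corresponding term vanishes, so only the cells meeting the finite set $\{s_1^i,\dots,s_r^i\}$ contribute, and there are at most $r$ of them. On a cell containing a local extremum $s_k^i$, a direct computation (writing the variation as the sum of the two monotone pieces on either side of $s_k^i$) gives
$$\mathrm{V}\!\left(f_i;[t_{l-1},t_l]\right)-\left|f_i(t_l)-f_i(t_{l-1})\right|
\leq 2\,\omega\!\left(f_i;\tfrac{b-a}{n}\right),$$
where $\omega(f_i;\cdot)$ is the modulus of continuity of the uniformly continuous function $f_i$. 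Summing over the at most $r$ bad cells yields
$$\left|\tilde Z_{ij(n)}-Z_{ij(n)}\right|\leq 2r\,\omega\!\left(f_i;\tfrac{b-a}{n}\right)\xrightarrow[n\to+\infty]{}0,$$
since the mesh $\tfrac{b-a}{n}$ tends to $0$. As this holds for almost every realization of $h_i$, the claimed almost sure convergence follows.

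The main obstacle is the middle step: making the identification $\mathcal{G}(t,f_i)=f_i(a)+\mathrm{V}(f_i;[a,t])$ rigorous from the piecewise definition of $\mathcal{G}$, and then carefully estimating the variation defect on the grid cells that straddle a change point, where $f_i$ fails to be monotone and the chord genuinely undercounts the true variation. Everything else is the elementary observation that, for a continuous piecewise monotone function, the sampled variation converges to the total variation because the undercounting is localized at the finitely many extrema and is controlled by uniform continuity.
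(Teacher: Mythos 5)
Your proof is correct, but it is organized around a different key observation than the paper's. The paper proceeds by brute-force expansion: it fixes $t$ in the interior of a monotonicity interval $]s_l^i,s_{l+1}^i[$, writes out $Z_{ij(n)}=\mathcal{G}(t_{j(n)},f_i)$ from the definition of the monotonizing operator, rewrites each jump $|f_i(s_{k-1}^i)-f_i(s_k^i)|$ as a telescoping sum of grid increments plus boundary terms (introducing indices $j_k,p_k$ for the grid points bracketing each change point), cancels the common increments against $\tilde Z_{ij(n)}$, and concludes that the finitely many leftover boundary terms vanish by continuity of $f_i$; the case where $t$ is itself a change point is dispatched with ``we get similar results.'' You instead prove the clean structural identity $\mathcal{G}(t,f_i)=f_i(a)+\mathrm{V}(f_i;[a,t])$, recognize $\tilde Z_{ij}$ as the sampled variation, and reduce everything to the classical fact that for a continuous, piecewise monotone function the sampled variation undercounts the true variation only on cells straddling one of the $r$ extrema, each defect being at most twice the modulus of continuity at the mesh size. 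This buys you three things the paper's argument does not make explicit: a quantitative bound $|\tilde Z_{ij}-Z_{ij}|\leq 2r\,\omega(f_i;\frac{b-a}{n})$ that is uniform over all $j$ (hence immediately valid along any sequence $j(n)$, with no case distinction on the limit point $t$), a transparent explanation of why the sign of the difference is constant (the chord always undercounts), and independence from the index bookkeeping that makes the paper's display-level manipulations hard to check. The price is the verification of the total-variation identity, but as you note this is a routine unwinding of the definition of $\mathcal{G}$ on each monotonicity interval, using $\pi(t)\left(f_i(t)-f_i(s_{l(t)}^i)\right)=\left|f_i(t)-f_i(s_{l(t)}^i)\right|$ and the additivity of variation over the intervals $]s_{k}^i,s_{k+1}^i[$; your appeal to Proposition~\ref{proposition_g2} to transport change points to $s_k^i=h_i(s_k)$ and identify $Z_{ij}=\mathcal{G}(t_j,f_i)$ matches the paper's own use of that proposition. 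One small point worth a sentence in a polished write-up: for small $n$ a single grid cell may contain several change points, but the same accounting (each monotone piece contributes at most one modulus-of-continuity term, and there are at most $r+1$ pieces in total) still yields the $2r\,\omega(f_i;\frac{b-a}{n})$ bound, so nothing is lost.
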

\begin{proof}
Set $t\in]a,b[\smallsetminus\left\{s_1^i,\dots,s_r^i\right\}$, and $l\in\{0,1,\dots,r\}$
such that $t\in\left]s_l^i,s_{l+1}^i\right[$. Consider $j(n), \: \frac{j(n)}{n}\xrightarrow[n\to+\infty]{}t,$
then $\exists n_0\in\mathbb{N}\:/\:\forall n\in\mathbb{N},\:n\geq n_0\Rightarrow
\frac{j(n)}{n}\in\left]s_l^i,s_{l+1}^i\right[.$
$\forall n\geq n_0$, we have
\begin{align*}
\tilde{Z}_{ij(n)}&=\tilde{Z}_{ij(n)-1}+\left|Y_{ij(n)}-Y_{ij(n)-1}\right|\\
&=Y_{i0}+\sum_{k=1}^{j(n)}\left|Y_{ik}-Y_{ik-1}\right|.
\end{align*}
Moreover,
\begin{align*}
Z_{ij(n)}&=g\circ h_i^{-1}\left(t_{j(n)}\right)\\
&=f_i\left(t_{j(n)}\right)\Pi\left(t_{j(n)},f_i\right)+\Pi
\left(t_{j(n)},f_i\right)
f_i\left(s_l^i\right)+f_i(s_0)+\sum_{k=1}^{l}\left|
f_i\left(s_{k-1}^i\right)-f_i\left(s_k^i\right)\right|,
\end{align*}
which yelds that
\begin{align*}
Z_{ij(n)}&=\Pi\left(t_{j(n)},f_i\right)
\left(Y_{ij(n)}-f_i\left(s_l^i\right)\right)+Y_{i0}+
\sum_{k=1}^{l}\left|f_i\left(s_{k-1}^i\right)-f_i\left(s_k^i\right)\right|\\
&=\left|Y_{ij(n)}-f_i\left(s_l^i\right)\right|+Y_{i0}+
\sum_{k=1}^{l}\left|f_i\left(s_{k-1}^i\right)-f_i\left(s_k^i\right)\right|\\
&=A+Y_{i0}+B.
\end{align*}
Write $\forall k=1,\dots,l+1,\: s_{k-1}^i\leq t_{j_k-p_k}<\cdots<t_{j_k}\leq s_k^i,$ we get that
\begin{align*}
&\left|f_i\left(s_{k-1}^i\right)-f_i\left(s_k^i\right)\right|\\
=&\left|f_i\left(s_{k-1}^i\right)-f_i\left(t_{j_k-p_k}\right)+
\sum_{q=1}^{p_k}\left(f_i\left(t_{j_k-q}\right)-f_i\left(t_{j_k-q+1}
\right)\right)+f_i\left(t_{j_k}\right)-f_i\left(s_k^i\right)\right|\\
=&\left|f_i\left(s_{k-1}^i\right)-f_i\left(t_{j_k-p_k}\right)\right|+
\sum_{q=1}^{p_k}\left|f_i\left(t_{j_k-q}\right)-f_i\left(t_{j_k-q+1}
\right)\right|+\left|f_i\left(t_{j_k}\right)-f_i\left(s_k^i\right)\right|\\
=&\left|f_i\left(s_{k-1}^i\right)-Y_{ij_k-p_k}\right|+
\sum_{q=1}^{p_k}\left|Y_{ij_k-q+1}-Y_{ij_k-q}\right|+
\left|Y_{ij_k}-f_i\left(s_k^i\right)\right|,
\end{align*}
Hence
\begin{align*}
B=&\sum_{k=1}^{l}\left|f_i\left(s_{k-1}^i\right)-f_i\left(s_k^i\right)\right|\\
=&\sum_{k=1}^{l}\left|f_i\left(s_{k-1}^i\right)-Y_{ij_k-p_k}\right|
+\sum_{k=1}^{l}\left|Y_{ij_k}-f_i\left(s_k^i\right)\right|+
\sum_{k=1}^{l}\sum_{q=1}^{p_k}|Y_{ij_k-q+1}-Y_{ij_k-q}|\\
=&\sum_{k=1}^{l}\left|f_i\left(s_{k-1}^i\right)-Y_{ij_k-p_k}\right|
+\sum_{k=1}^{l}\left|Y_{ij_k}-f_i\left(s_k^i\right)\right|+
\sum_{k=1}^{j_l}|Y_{ik}-Y_{ik-1}|\\
&-\sum_{k=1}^{l-1}|Y_{ij_{k+1}-p_{k+1}}-Y_{ij_{k+1}-p_{k+1}-1}|-
|Y_{i1}-Y_{i0}|.
\end{align*}
With the same ideas, we can write
\begin{align*}
&A=\left|Y_{ij(n)}-f_i\left(s_l^i\right)\right|=\\
&\sum_{q=j_{l+1}-j(n)+1}^{p_{l+1}+1}|Y_{ij_{l+1}-q+1}-Y_{ij_{l+1}-q}|+
\left|Y_{ij_{l+1}-p_{l+1}}-f_i\left(s_l^i\right)\right|-
|Y_{ij_{l+1}-p_{l+1}}-Y_{ij_{l+1}-p_{l+1}-1}|.
\end{align*}
As a result,
\begin{align*}
Z_{ij(n)}-\tilde{Z}_{ij(n)}=&
\sum_{k=1}^{l+1}\left|f_i\left(s_{k-1}^i\right)-Y_{ij_k-p_k}\right|+
\sum_{k=1}^l\left|Y_{ij_k}-f_i\left(s_k^i\right)\right|\\
&-\sum_{k=1}^l|Y_{ij_{k+1}-p_{k+1}}-Y_{ij_{k+1}-p_{k+1}-1}|
-|Y_{i1}-Y_{i0}|.
\end{align*}
By continuity of $f$, $f_i$ is also continuous, hence
$$Z_{ij(n)}-\tilde{Z}_{ij(n)}\xrightarrow[n\to+\infty]{a.s}0.$$
For $t\in\left\{s_0^i,\dots,s_{r+1}^i\right\}$, we get similar results, leading to the conclusion.
\end{proof}
As a conclusion, we can extend our results to the case of nonmonotone functions since we transform the problem into a monotone warping problem with the same warping functions. These functions $h_i\: i=1,\dots,m$ can be estimated by our methodology using the new observations $\tilde{Z}_{ij}$ $ \forall i=1,\dots,m,\: \forall j=0,\dots,n$. The estimator can be written in the following form
\begin{equation}\label{estimator_hg}
\widetilde{\phi\circ h_{i_0}^{-1}}_{mn}(t)=\frac{1}{m-1}
\sum_{\begin{subarray}{c}i=1\\i\neq i_0\end{subarray}}^{m}T_{j_i},
\end{equation}
with 
$$T_{j_i}=\arg\min_{t_j\in\{t_0,\dots,t_n\}}
\left|\tilde{Z}_{ij}-\tilde{Z}_{i_0j_0}\right|,$$
and
$$t_{j_0}=\arg\min_{t_j\in\{t_0,\dots,t_n\}}|t_j-t|.$$
\subsection{Dealing with noisy data}
If theoretical asymptotical results are only given in a non noisy framework, we can still handle the case where the data are observed in the standard regression model
\begin{equation}\label{simulation_datab}
Y_{ij}=f\circ h_i^{-1}(t_j)+\varepsilon_{ij},\:i=1,\dots,m,\:j=0,\dots,n,
\end{equation}
with $\varepsilon_{ij} \stackrel{{\rm i.i.d}}{\sim} \mathcal{N}(0,\sigma^2)$. To be able to apply our algorithm, we first denoise the data. For this, we estimate separately each function $f_i,\:i=1,\dots,m,$ by a Kernel estimator. On a practical point of view, we describe the estimation procedure used in the simulations.
\begin{enumerate}
\item $\forall i\in\{1,\dots,m\}$, $f_i$ is estimated
$$\hat{f}_i(t_0)=\frac{1}{m}\sum_{i=1}^{m}Y_{i0}
\xrightarrow[m\to+\infty]{a.s}f(t_0),$$
$$\hat{f}_i(t_n)=\frac{1}{m}\sum_{i=1}^{m}Y_{in}
\xrightarrow[m\to+\infty]{a.s}f(t_n).$$ Given a Gaussian kernel $\Phi$, $\forall j\in\{1,\dots,n-1\}$, 
 $f_i(t_j)$ is estimated by
\begin{equation}\label{method_nu}
\hat{f}_i(t_j)=\frac{\sum_{k=0}^{n}Y_{ik}\Phi\left(
\frac{t_k-t_j}{\nu_i}\right)}{\sum_{k=0}^{n}\Phi\left(
\frac{t_k-t_j}{\nu_i}\right)}.
\end{equation}
The banwidths $\nu_i$ are to be properly chosen.
\item The estimation procedure can be conducted using the denoised observations $\hat{f}_i(t_j)$, leading to new estimates $\hat{f}(t)$ of the structural expectation $f\circ\phi^{-1}$.
\end{enumerate}
We point out that the efficiency of the procedure heavily relies on a proper choice of the bandwidths $\nu_i,\: i=1,\dots,m$. Cross-validation technics do not provide good results since the aim is not to get a good estimation of the function but only a good separation of the different functions. Hence oversmoothing the data is not a drawback in this settings. So, the smoothing parameters $\nu=\nu_i, \forall i=1,\dots,m$ are obtained by minimizing the following matching criterion
\begin{equation*}
\hat{\nu}=\arg\min_{\nu\in L}\sum_{i=1}^{m}\sum_{j=0}^{n}
\left|\hat{f}^i(t_j)-\hat{f}(t_j)\right|,
\end{equation*}
over a grid $L$. Practical applications of this algorithm are given in Figure
\section{Numerical Study}\label{s_data}
In this part, we estimate the structural expectation, using both the proposed method and the analytic registration approach developed in \cite{Ramsay02}. Note that an alternative to analytic registration is provided by the so-called landmarks registration approach in \cite{Kneip92}, but this approach requires the determination of landmarks (such as local extrema) which can be difficult in our simulations. Hence, this method is not implemented here. First, results on simulated data are given in order to compare these two methods. Then, an application of our methodology is given for a real data set.
\subsection{Simulations}
Two simulation studies are carried out in this section. The first one involves a strictly increasing function and the second one involves a non monotone function.
\paragraph{Warped functions.} Let $f$ and $g$ (see Figures \ref{fig01} and \ref{fig02}) be defined by
$$\forall t\in[0,1],\:f(t)=\sin(3\pi t)+3\pi t\mbox{ and }g(t)=\frac{\sin(6\pi t)}{6\pi t}.$$
These two functions will be warped by the following random warping process.
\paragraph{Warping processes.} The Stochastic warping functions $H_i$, $i=1,\dots,m$, are simulated by the iterative process described below.\\

\fbox{\begin{minipage}{0.9\textwidth}
Let $\epsilon>0$. First, for all $i=1,\dots,m$, let $H_i^{(0)}$ be the identity function. Then, the warping functions $H_i^{(k+1)}$, $i=1,\dots,m$, are successively carried out from functions $H_i^{(k)}$, $i=1,\dots,m$, by iterating $N$ times the following process :
\begin{enumerate}
\item Let $U$ be a uniformly distributed random variable on $[10\epsilon,1-10\epsilon]$.
\item Let $V_i$, $i=1,\dots,m$, be independant and identically uniformly distributed variables on $[U-\epsilon,U+\epsilon]$.
\item For all $i=1,\dots,m$, the warping function $H_i^{(k)}$ is warped as follows :
$$H_i^{(k+1)}=W_i\circ H_i^{(k)}$$
where $W_i$ is defined by
$$W_i(t)=\left\{\begin{array}{cl}
\frac{V_i}{U}t&\mbox{if }0\leq t\leq U,\\
\frac{1-V_i}{1-U}t+\frac{V_i-U}{1-U}&\mbox{if }U<t\leq 1.\\
\end{array}\right.$$
\end{enumerate}
\end{minipage}}
$ $ \vskip .1in
We have thus defined stricly increasing stochatic functions $H_i:[0,1]\to[0,1]$, such that
$$\forall t\in[0,1],\:{\bf E}\left(H_i(t)\right)=\phi(t)=t.$$
Hence, the warping process is centered, in the sense that, the structural expectation $f\circ\phi^{-1}$ is equal to $f$.\\

Our simulated warping functions $H_i$, $i=1,\dots,m$, have been carried out by using the above iterating process with $m=30$, $N=3000$ and $\epsilon=0.005$. These processes are shown on Figure \ref{fig01} and on Figure \ref{fig02}. In these figure, we can see the very large phase variations. For example, point 0.2 is warped between approximately 0.05 and 0.35 for the first case, and between approximately 0.00 and 0.40 for the second case.
\paragraph{Simulated data.} Finally, simulated data are carried out on an equally spaced grid as follows :
$$Y_{ij}=f\left(H_i^{-1}\left(t_j\right)\right)$$
and
$$Y_{ij}=g\left(H_i^{-1}\left(t_j\right)\right)+\epsilon_{ij}$$
with $t_j=\frac{j}{n}$, $j=0,1,\dots,n$, $n=100$ points. Simulated warped functions are respectively shown on Figures \ref{fig01} and \ref{fig02}. \vskip .2in
Figures \ref{fig01} and \ref{fig02} show the functions $f$ and $g$ and the mean functions of the warped functions. We can easily see that the classical mean is not adapted to describe the data. In particular, the mean function of the first simulated data does not reflect the flat component of the function $f$ (in the range $[0.2,0.4]$) which yet appears in each individual warped function. In the same way, the mean function of the second simulated data set does not reflect the structure of the individual curves.  The classical mean attenuated curve variations.\\

The estimated structural expectations with both the analytic registration approach and the proposed method are shown on Figures \ref{fig01} and \ref{fig02} (bottom right figures). We can easily see that estimations with our proposed method are closer to the structural expectations $f$ and $g$. These results can be explained as follows :
\begin{itemize}
\item For the first simulated data set, the analytic registration approach does not directly work on the strictly increasing functions but on first derivatives. However,  theoretically registering a given function data set is not the same issue as the registration of first derivatives of these functions.
\item The analytic registration approach uses the mean curve to register all the functions. Due to the drawbacks of the mean curve when dealing with large deformations (for instance in the second data set, where the result is a very flat mean curve), the structural mean approach provides better results.
\item For both first and second simulated data sets, the analytic registration approach works on estimated functions and not directly with given data, which  implies an additional source of error.
\end{itemize}
\begin{figure}[htbp]
        \centering
        \includegraphics[width=0.70\textwidth]{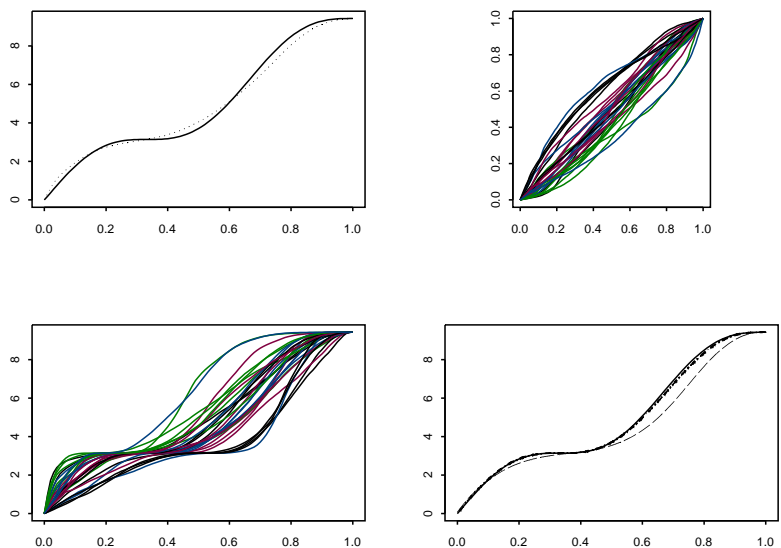}
        \caption{Function $f$ is shown on top left figure (solid line). Simulated warping processes are shown on top right figure. Simulated warped functions are shown on bottom left figure. The classical mean of these functions is draw on top left figure (dotted line). Finally, estimations of $f$ with the Analytic registration procedure (dashed line) and with our method (dotted-dashed line) are shown on bottom right figure.}
        \label{fig01}
\end{figure}
\begin{figure}[htbp]
        \centering
        \includegraphics[width=0.70\textwidth]{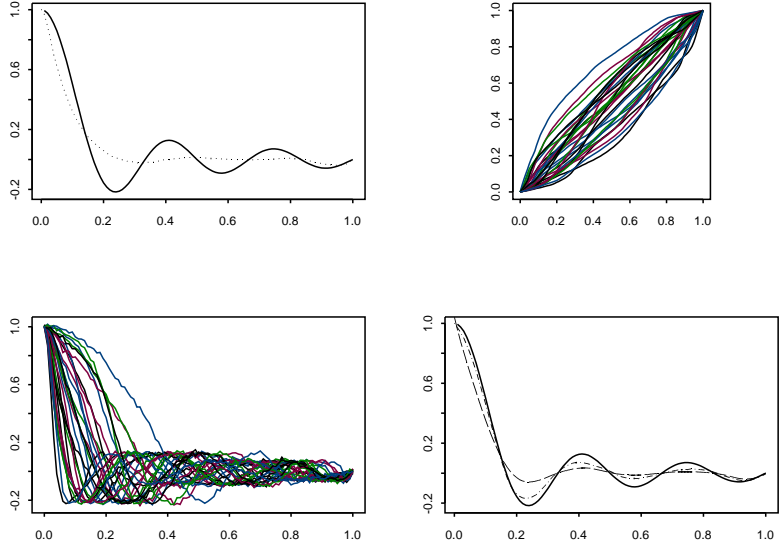}
        \caption{Function $g$ is shown on top left figure (solid line). Simulated warping processes are shown on top right figure. Simulated warped functions are shown on bottom left figure. The classical mean of these functions is draw on top left figure (dotted line). Finally, estimations of $g$ with the Analytic registration procedure (dashed line) and with our method (dotted-dashed line) are shown on bottom right figure.}
        \label{fig02}
\end{figure}
\subsection{A concrete application : multiple referees and equity}
The field of application of the results presented in this paper is large. Here we consider an example in the academic field : how to guarantee equality in an exam with several different referees?

Consider an examination with a large number of candidates, such that it is impossible to evaluate the candidates one after another. So the students are divised into $m$ groups and a board of examiners is charged to grade the students of one group. The evaluation is performed by assigning a score from $0$ to $20$.

The $m$ different boards of examiners are supposed to behave the same way in order to respect the equality among the candidates. Moreover it is assumed that the sampling of the candidates is perfect in the sense that it is done in such a way that each board of examiners evaluates candidates with the same global level. Hence, if all the examiners had the same requirement levels, the distribution of the ranks would be the same for all the boards of examiners. Here, we aim at balancing the effects of the differences between the examiners, and gaining equity for the candidates, and at studying real data provided by the french competitive teacher exam {\sl Agr\'egation de math\'ematiques}.\\

This situation can be modeled as follows. Define for each group $i=1,\dots,m$, the score obtained by students as ${\bf X}^i=\left\{X^i_j\in(0,20),\:j=1,\dots,n_i\right\}$. Let $f_i$, $i=1,\dots,m$ be the repartition function of the group $i$ defined by
$$f_i(t)=\frac{1}{n}\sum_{j=1}^n{\bf1}_{X^i_j\leq t}.$$
Under the assumption that all the examiners give the same ranking to two candidates with the same level, which will be denoted $H_0$, it implies that the random variables ${{\bf X}}^1,\dots,{\bf X}^m$ are equally distributed. Figure \ref{fig03} shows the distinct empirical functions for all the $m=13$ groups.\\

First, we test for every couple of sets $(i,j)$ the assumption ${\bf X}^i \sim {\bf X}^j$. For this we perform the following  homogeneity test, see for instance \cite{saporta}. Define for all $k=1,\dots,20$, $n_k^i=\sum_{l=1}^m {\bf 1}_{X^i_l=k}$ and $n_k^j=\sum_{l=1}^m {\bf 1}_{X^j_l=k}$. Define $\hat{\mu}_k=\frac{n_k^i+n_k^j}{2n}.$ Finally set $d_k= n \hat{\mu}_k$ and
$$D_n^i =\sum_{k=1}^{20} \frac{(d_k-n_k^i)^2}{d_k}, \quad D_n^j =\sum_{k=1}^{20} \frac{(d_k-n_k^j)^2}{d_k}.$$
Under $H_0$, $D_n=D_n^i+D_n^j$ converges in distribution to a $\chi^2$ distribution and almost surely to $+\infty$ under the assumption that the two laws are different.\\

We have $n=4000$ candidates and $m=13$ examiners. We test the assumption {\bf $H_0$} for all the different possibilities. In 60\% of cases and for a level equal to $5\%$, we reject the assumption that the rankings follow the same distribution.\\
 
As a consequence the following procedure is proposed. We aim at finding the average way of ranking, with respect to the ranks that were given within the $13$ bunchs of candidates. For this, assume that there is such average empirical distribution function and that the empirical distribution function of each group is warped by a random process from this reference distribution function. Hence a good choice is given by the structural expectation, since the functions $f_j,\:j=1,\dots,13$ are increasing so that Theorem~\ref{t_fm1} may apply.\\
Figure \ref{fig03} shows the different empirical distributions and the structural empirical distribution function. At each dot on the empirical distribution function, corresponding to the rank given within one group, is associated its correspondent structural rank. It is obtained by simply projecting onto the structural empirical distribution function. As a result we obtain rescaled structural ranks corresponding to the rank obtained by each candidate if they could have been judged by an average board of examiner, leading to a more fair rankings. Indeed the difference of judgments between each subgroup are revised according to the average judgment of all the groups of examiners.\\
\begin{figure}[htbp]
        \centering
        \includegraphics[width=0.70\textwidth]{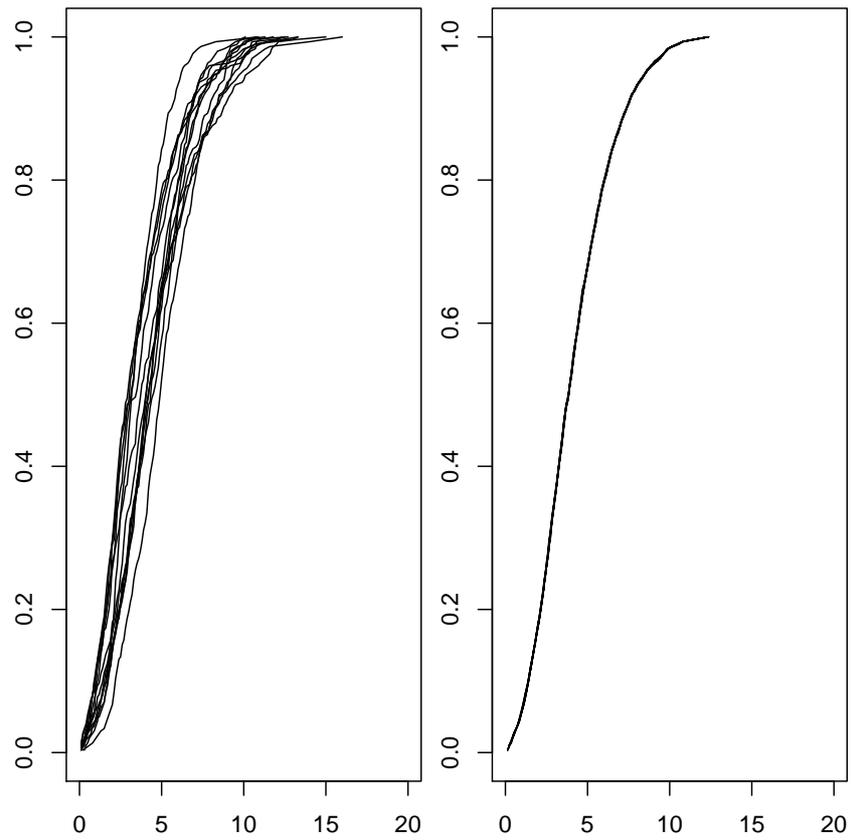}
        \caption{Empirical distribution functions (left figure) and structural expectation estimation (right figure).}
        \label{fig03}
\end{figure}
In conclusion, structural expectation provides a data-driven pattern, which plays the role of a reference pattern to which all the different curves can be compared. We applied successfully this method to rescale the ranks obtained by candidates evaluated by different boards of examiners. This use is not restrictive since it can be used to provide mean patterns for other types of functional data in various fields such as econometry, biology or data traffic for instance.

\appendix
\section{Proofs and technical lemmas}\label{s_proofs}

In practice, functions $(f_i)_{i\in{\{1,\dots,m\}}}$ are evaluated on a discrete interval of $\mathbb{R}$ as described in Section \ref{s_model}. In order to prove Theorem \ref{t_fm1} and Theorem \ref{t_h}, we first study asymptotic results on the theoretical continuous model, i.e.
\begin{equation}\label{m_continu}
f_i(t)=f\circ h_i^{-1}(t),\:i\in{\{1,\dots,m\}},\:t\in[a,b],
\end{equation}
where $H$ is defined in the same way as in model \eqref{m_model}. In a second step, we will extend the proofs to the discretized model and prove results of Section 3.\\
\indent So consider that all functions are measured on the entire interval $[a,b]$. After asymptotic results are proved for this continuous model in Section \ref{ss_proofs_continu}, we use these results to prove Theorem \ref{t_fm1} and Theorem \ref{t_h} (and subsequently Theoreme \ref{t_f}) in Section \ref{ss_proofs_proofs}.

\subsection{Asymptotic results for the continuous model}\label{ss_proofs_continu}

For the continuous model \eqref{m_continu}, we provide asymptotic results (analogous to Theorem \ref{t_fm1} and Theorem \ref{t_h}) and their proofs.

\subsubsection{Estimator and asymptotic results of the inverse of the structural expectation}

Considering the continuous model \eqref{m_continu}, we define an empirical estimator of the inverse of the structural expectation in the following way. Set
\begin{equation}\label{e_fm1_continu}
\overline{{f_{ES}^{-1}}}=\frac{1}{m}\sum_{i=1}^{m}f_i^{-1}.
\end{equation}
The following theorem gives us consistency and asymptotic normality of this estimator.
\begin{theorem}\label{t_fm1_continu}
Under assumption \eqref{d_h}, we have that $\overline{{f_{ES}^{-1}}}$ converges almost surely to ${f_{ES}^{-1}}$
$$\left\Vert\overline{{f_{ES}^{-1}}}-{f_{ES}^{-1}}\right\Vert_\infty\xrightarrow[m\to\infty]{as}0.$$
Moreover, let assume that assumptions \eqref{h_1} and \eqref{h_2} are allowed. Then, we have that $\sqrt{m}(\overline{{f_{ES}^{-1}}}-{f_{ES}^{-1}})$ converges weakly toward a zero-mean Gaussian process $G$:
$$\sqrt{m}\left(\overline{{f_{ES}^{-1}}}-{f_{ES}^{-1}}\right)\xrightarrow[m\to\infty]{\D}G,$$
where the covariance function of $G$ is defined, for all $(s,t)\in[f(a),f(b)]^2$, by
$${\rm Cov}(G(s),G(t))=\rr\left(f^{-1}(s),f^{-1}(t)\right).$$
\end{theorem}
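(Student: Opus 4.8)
The plan is to establish the two assertions of Theorem~\ref{t_fm1_continu} separately, since consistency requires only the basic boundedness and monotonicity from assumption \eqref{d_h}, while asymptotic normality additionally invokes the Lipschitz-type controls \eqref{h_1} and \eqref{h_2}. The fundamental observation, already noted in the text preceding the statement, is that $f_i^{-1}=h_i\circ f^{-1}$, so that $\overline{{f_{ES}^{-1}}}=\left(\frac{1}{m}\sum_{i=1}^m h_i\right)\circ f^{-1}$. Thus the estimator is an empirical mean of i.i.d.\ realizations of the process $H$, post-composed with the fixed deterministic map $f^{-1}$. Everything then reduces to a law of large numbers and a central limit theorem for the $\mathcal{C}([a,b])$-valued i.i.d.\ random elements $h_i$, transported through $f^{-1}$.

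For the almost sure consistency, first I would fix $t\in[a,b]$ and apply the strong law of large numbers pointwise to get $\frac{1}{m}\sum_{i=1}^m h_i(t)\xrightarrow[m\to\infty]{as}\phi(t)$, using that $H$ is bounded (so integrable) by assumption \eqref{d_h} and that $\phi$ is well defined by the Proposition. To upgrade pointwise convergence to uniform convergence in $\|\cdot\|_\infty$, I would exploit monotonicity: each $h_i$ is increasing and $\phi$ is continuous and increasing with $\phi(a)=a$, $\phi(b)=b$, so the classical Dini/Pólya argument (a sequence of monotone functions converging pointwise to a continuous monotone limit on a compact interval converges uniformly) applies to $\frac{1}{m}\sum_{i=1}^m h_i$. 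Composing with the fixed continuous $f^{-1}$ preserves the uniform convergence, yielding $\|\overline{{f_{ES}^{-1}}}-{f_{ES}^{-1}}\|_\infty\to 0$ almost surely.

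For the asymptotic normality, the idea is to invoke a functional central limit theorem in $\mathcal{C}([f(a),f(b)])$ for the i.i.d.\ processes $G_i:=h_i\circ f^{-1}-\phi\circ f^{-1}$, which are centered with covariance $\mathrm{cov}(G_i(s),G_i(t))=\mathrm{r}(f^{-1}(s),f^{-1}(t))-{f_{ES}^{-1}}(s){f_{ES}^{-1}}(t)$; note this matches the stated covariance of $G$ once one accounts for the centering, since $\mathrm{r}$ is here the covariance function (not the second moment $\gamma$). Convergence of the finite-dimensional distributions follows from the multivariate classical CLT. The work is in establishing tightness, and this is where assumptions \eqref{h_1} and \eqref{h_2} enter: condition \eqref{h_1} gives $\bE|H(s)-\bE H(s)-(H(t)-\bE H(t))|^2\le C_1|s-t|^2$, and \eqref{h_2} gives $|f^{-1}(s)-f^{-1}(t)|^2\le C_2|s-t|^2$, so that the increments of $G_i$ satisfy a second-moment bound of order $|s-t|^2$. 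This moment condition on the increments is precisely the hypothesis needed to apply a standard tightness criterion (for instance, the Kolmogorov--Chentsov type moment bound for processes on a compact interval), which combined with the finite-dimensional convergence delivers weak convergence of $\sqrt{m}(\overline{{f_{ES}^{-1}}}-{f_{ES}^{-1}})$ to the Gaussian process $G$.

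I expect the tightness step to be the main obstacle: the pointwise and finite-dimensional statements are essentially immediate from classical limit theorems once the $f_i^{-1}=h_i\circ f^{-1}$ reduction is made, whereas controlling the modulus of continuity of the partial-sum process uniformly in $m$ requires the quantitative increment bounds and a careful application of a moment-based tightness criterion. A secondary point demanding care is bookkeeping the distinction between the covariance function $\mathrm{r}$ and the second moment $\gamma$ in identifying the limiting covariance, and verifying that the deterministic reparametrization by $f^{-1}$ indeed transports both the limit law and the tightness estimate without distortion, which it does because $f^{-1}$ is a fixed bi-Lipschitz-controlled homeomorphism under \eqref{h_2}.
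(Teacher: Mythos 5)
Your proposal is correct in substance and, for the weak-convergence half, follows essentially the same route as the paper: finite-dimensional convergence via the multivariate CLT, then tightness from the uniform-in-$m$ increment bound obtained by chaining \eqref{h_1} and \eqref{h_2}, namely $\bE\bigl|\sqrt m(\overline{{f_{ES}^{-1}}}(s)-{f_{ES}^{-1}}(s))-\sqrt m(\overline{{f_{ES}^{-1}}}(t)-{f_{ES}^{-1}}(t))\bigr|^2\leq C_1C_2|s-t|^2$, which holds uniformly in $m$ because the summands are i.i.d.\ and centered. The paper invokes the moment criterion of van der Vaart and Wellner (Example 2.2.12) where you invoke a Kolmogorov--Chentsov criterion; since the exponent in $|s-t|$ is $2=1+1$ and $\sqrt m(\overline{{f_{ES}^{-1}}}(f(a))-{f_{ES}^{-1}}(f(a)))=0$ almost surely (because $H(\cdot,a)=a$), either criterion closes the argument, so this difference is cosmetic. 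Where you genuinely diverge is the consistency half: the paper treats $X_i=h_i\circ f^{-1}-{f_{ES}^{-1}}$ as i.i.d.\ centered random elements of the separable Banach space $\mathcal{C}([f(a),f(b)])$ and applies the Banach-space strong law of large numbers (Ledoux--Talagrand, Corollary 7.10), which yields uniform convergence in one stroke; you instead apply the scalar SLLN pointwise and upgrade to uniformity by the P\'olya-type argument for increasing functions converging to a continuous increasing limit. Your route is more elementary and perfectly valid --- it trades Banach-space machinery for the monotonicity structure --- provided you add the standard patch that the pointwise null sets depend on $t$, so the SLLN must first be applied on a countable dense subset of $[a,b]$ (whose union of null sets is still null) before monotonicity and continuity of $\phi$ deliver uniform convergence on the whole interval; composing with the fixed bijection $f^{-1}$ then transfers the sup-norm convergence as you say.

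One slip to correct: your covariance identification for $G_i=h_i\circ f^{-1}-\phi\circ f^{-1}$ is inconsistent with your own parenthetical. Since the paper's $\rr(\cdot,\cdot)$ is the \emph{covariance} function of $H$ (not its bivariate second moment), the centered process $G_i$ satisfies $\mathrm{cov}(G_i(s),G_i(t))=\rr\left(f^{-1}(s),f^{-1}(t)\right)$ exactly; subtracting ${f_{ES}^{-1}}(s){f_{ES}^{-1}}(t)$ on top of that double-counts the centering, and would be the right correction only if $\rr$ denoted $\bE[H(s)H(t)]$. This does not invalidate your argument --- the finite-dimensional CLT produces the true covariance regardless of how it is labelled --- but the formula as written is wrong and should be fixed before the proof is recorded.
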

\begin{proof}
Almost sure convergence of estimator $\overline{{f_{ES}^{-1}}}$ is directly deduced from corollary 7.10 (p. 189) in \cite{Ledoux91}. This corollary is an extension of the Strong Law of Large Numbers to Banach spaces.\\
For all $i\in{\{1,\dots,m\}}$, the functions $\left(f_i^{-1}\right)_{i\in{\{1,\dots,m\}}}$ are obviously strictly increasing, hence $\overline{{f_{ES}^{-1}}}$ is strictly increasing, and we have
\begin{equation*}
\overline{{f_{ES}^{-1}}}=\frac{1}{m}\sum_{i=1}^{m}f_i^{-1}=\frac{1}{m}\sum_{i=1}^{m}\left(f\circ h_i^{-1}\right)^{-1}=\frac{1}{m}\sum_{i=1}^{m}h_i\circ f^{-1}.
\end{equation*}
For all $i\in\mathbb{N}^*$, let
$$X_i=h_i\circ f^{-1}-{f_{ES}^{-1}},$$
and, for all $m\in\mathbb{N}^*$, let
$$S_m=\sum_{i=1}^{m}X_i.$$
The $\left(X_i\right)_{i\in{\{1,\dots,m\}}}$ are $B$-valued random variables, where $B=\mathcal{C}\left([f(a),f(b)]\right)$ is a separable Banach space. Moreover, the dual space of $B$ is the set of bounded measures on $\left[f(a),f(b)\right]$ (\cite{Rudin87}). Hence, our framework corresponds to Chapter 7 of \cite{Ledoux91} and thus we can apply Corollary 7.10. Indeed, we have
$${\bf E}\left(\Vert X_1\Vert_\infty\right)<+\infty\mbox{ et }{\bf E}\left(X_1\right)=0,$$
then
$$\frac{S_m}{m}\xrightarrow[m\to+\infty]{a.s}0,$$
which proves almost sure convergence.

We now turn to weak convergence. From the multivariate CLT, for any $k\in\mathbb N^{\ast}$ and fixed $(y_1,\ldots, y_k)\in \left[f(a),f(b)\right]^k$,
\begin{eqnarray*}
\sqrt m
\left(\left(
\begin{array}{c}
\overline{{f_{ES}^{-1}}}(y_1) \\ \vdots \\ \overline{{f_{ES}^{-1}}}(y_k)
\end{array}\right)
-
\left(
\begin{array}{c}
{f_{ES}^{-1}}(y_1) \\ \vdots \\ {f_{ES}^{-1}}(y_k)
\end{array}
\right)
\right)
\xrightarrow[m\to\infty]{\mathcal D}\mathcal N_k\left(0,\Gamma\right),
\end{eqnarray*}
where the covariance matrix $\Gamma=(\Gamma_{ij})_{i,j}$ is given by $\Gamma_{ij}=\mbox{cov}(F^{-1}(y_i), F^{-1}(y_j))= \mbox{cov}(H(f^{-1}(y_i)),H(f^{-1}(y_j)))$, $i,j=1,\ldots, k$. It remains to show that $\{\sqrt m(\overline{{f_{ES}^{-1}}}-{f_{ES}^{-1}})\}$ is tight. We verify the moment condition stated by \cite{VaartWellner96} [Example 2.2.12].
\begin{eqnarray*}
&&{\bf E}\left[\left|\sqrt m(\overline{{f_{ES}^{-1}}}(s)-{f_{ES}^{-1}}(s))-
\sqrt m(\overline{{f_{ES}^{-1}}}(t)-{f_{ES}^{-1}}(t))\right|^2\right]\\
&&\hspace{2cm}={\bf E}\left[m\left|\frac{1}{m}\sum_{i=1}^mf_i^{-1}(s)-{\bf E} F^{-1}(s)-\left(
\frac{1}{m}\sum_{i=1}^mf_i^{-1}(t)-{\bf E} F^{-1}(t)\right)\right|^2\right]\\
&&\hspace{2cm}={\bf E}\left[\left|F^{-1}(s)-{\bf E} F^{-1}(s)-\left(F^{-1}(t)-{\bf E} F^{-1}(t)\right)\right|^2\right],
\end{eqnarray*}
where the last equality follows from the fact that the $h_i$'s are i.i.d. Then, from \eqref{h_1} and \eqref{h_2}, we get that
\begin{eqnarray*}
{\bf E}\left[\left|\sqrt m(\overline{{f_{ES}^{-1}}}(s)-{f_{ES}^{-1}}(s))-
\sqrt m(\overline{{f_{ES}^{-1}}}(t)-{f_{ES}^{-1}}(t))\right|^2\right]\leq C_1C_2|s-t|^2,
\end{eqnarray*}
which completes the proof.
\end{proof}

\subsubsection{Estimator and asymptotic results of an individual warping function}

For the continuous model, we define an empirical estimator of the individual warping function $\phi\circ h_{i_0}^{-1}$ ($i_0\in {\{1,\dots,m\}}$) as follows. Conditionally to $F_{i_0}=f_{i_0}$, for all $t\in[a,b]$, let
\begin{equation}\label{e_h_continu}
\overline{\phi\circ h_{i_0}^{-1}}(t)=\frac{1}{m-1}\sum_{\begin{subarray}{c}i=1\\i\neq i_0\end{subarray}}^{m}f_i^{-1}\circ F_{i_0}(t).
\end{equation}
The following theorem gives us consistency and asymptotic normality of this estimator.
\begin{theorem}\label{t_h_continu}
Under assumption \eqref{d_h}, we have that $\overline{\phi\circ h_{i_0}^{-1}}$ converges almost surely to $\phi\circ h_{i_0}^{-1}$:
$$\left\Vert\overline{\phi\circ h_{i_0}^{-1}}-\phi\circ h_{i_0}^{-1}\right\Vert_\infty\xrightarrow[m\to\infty]{as}0.$$
Let $n=m^{\frac{1}{2}+\alpha}$ (with $\alpha>0$) and assume that \eqref{h_1} and \eqref{h_3} hold. Then, we have that  $\sqrt{m}(\overline{\phi\circ h_{i_0}^{-1}}-\phi\circ h_{i_0}^{-1})$ converges weakly to a zero-mean Gaussian process $Z$,
$$ \sqrt{m}(\overline{\phi\circ h_{i_0}^{-1}}-\phi\circ h_{i_0}^{-1})\xrightarrow[m\to\infty]{\D}Z,$$ with covariance function defined for all $(s,t)\in[a,b]^2$ by
$$\cov(Z(s),Z(t))=\rr\left(h_{i_0}^{-1}(s),h_{i_0}^{-1}(t)\right).$$
\end{theorem}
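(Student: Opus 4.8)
The plan is to mirror closely the proof of Theorem~\ref{t_fm1_continu}, working throughout conditionally on $F_{i_0}=f_{i_0}$, equivalently on the realized warping path $h_{i_0}$. The key algebraic identity is that, since $f_i=f\circ h_i^{-1}$ yields $f_i^{-1}=h_i\circ f^{-1}$, one has
$$f_i^{-1}\circ F_{i_0}=h_i\circ f^{-1}\circ f\circ h_{i_0}^{-1}=h_i\circ h_{i_0}^{-1}.$$
Hence the estimator \eqref{e_h_continu} is the empirical average of the $B$-valued random elements $h_i\circ h_{i_0}^{-1}$, $i\neq i_0$, where $B=\mathcal C([a,b])$. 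Conditionally on $h_{i_0}$ these elements are i.i.d.\ (the $h_i$ being i.i.d.\ and independent of $h_{i_0}$) with common conditional mean $\mathbf E[h_i\circ h_{i_0}^{-1}\mid h_{i_0}]=\phi\circ h_{i_0}^{-1}$, because $h_{i_0}^{-1}(t)$ is fixed under the conditioning and $\mathbf E H(u)=\phi(u)$.

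For the almost sure convergence I would set $X_i=h_i\circ h_{i_0}^{-1}-\phi\circ h_{i_0}^{-1}$ and invoke the Banach-space strong law (Corollary~7.10 of \cite{Ledoux91}) exactly as before: the $X_i$ are i.i.d., centered, and satisfy $\mathbf E\|X_1\|_\infty<\infty$ since $H$ is bounded on $[a,b]$ (it is increasing with $H(a)=a$, $H(b)=b$ under \eqref{d_h}). This gives $\frac1{m-1}\sum_{i\neq i_0}X_i\to 0$ almost surely in $\|\cdot\|_\infty$, i.e.\ the stated uniform consistency.

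The weak convergence again splits into finite-dimensional convergence and tightness. For fixed $(t_1,\dots,t_k)$ the multivariate CLT applied to the i.i.d.\ vectors $(h_i\circ h_{i_0}^{-1}(t_1),\dots,h_i\circ h_{i_0}^{-1}(t_k))$ gives asymptotic normality of $\sqrt m\big(\overline{\phi\circ h_{i_0}^{-1}}(t_j)-\phi\circ h_{i_0}^{-1}(t_j)\big)_j$ with limiting covariance $\mbox{cov}\big(H(h_{i_0}^{-1}(s)),H(h_{i_0}^{-1}(t))\big)=\rr\big(h_{i_0}^{-1}(s),h_{i_0}^{-1}(t)\big)$, which is precisely the claimed covariance of $Z$. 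Tightness follows from the moment criterion of \cite{VaartWellner96} [Example~2.2.12]: by the conditional i.i.d.\ structure the second moment
$$\mathbf E\left[\left|\sqrt m\big(\overline{\phi\circ h_{i_0}^{-1}}(s)-\phi\circ h_{i_0}^{-1}(s)\big)-\sqrt m\big(\overline{\phi\circ h_{i_0}^{-1}}(t)-\phi\circ h_{i_0}^{-1}(t)\big)\right|^2\right]$$
equals, up to the bounded factor $m/(m-1)$, the quantity $\mathbf E\big|H(u)-\mathbf E H(u)-(H(v)-\mathbf E H(v))\big|^2$ with $u=h_{i_0}^{-1}(s)$ and $v=h_{i_0}^{-1}(t)$. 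Assumption \eqref{h_1} bounds this by $C_1|u-v|^2$, and assumption \eqref{h_3} (Lipschitz continuity of $H^{-1}=h_{i_0}^{-1}$) bounds $|u-v|^2$ by $C_3|s-t|^2$, giving the required $C_1C_3|s-t|^2$.

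The step needing the most care is the conditioning itself: every statement is made conditionally on $F_{i_0}=f_{i_0}$, so the limiting process $Z$ is Gaussian with a covariance that depends on the realized path $h_{i_0}$, and one must verify that after conditioning the summands $h_i\circ h_{i_0}^{-1}$ genuinely form an i.i.d.\ centered family, which is where the independence of $h_{i_0}$ from the remaining $h_i$ is essential. It is also worth noting that \eqref{h_3} is exactly the ingredient that transports the regularity bound \eqref{h_1} (stated in the original time scale $u,v$) back to the scale $s,t$; this is the only structural difference from the proof of Theorem~\ref{t_fm1_continu}, where \eqref{h_2} played that role. Finally, the discretization rate $n=m^{1/2+\alpha}$ does not enter the continuous argument, the model \eqref{m_continu} carrying no discretization.
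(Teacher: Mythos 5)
Your proposal is correct and follows essentially the same route as the paper's proof: consistency via the Banach-space strong law (Corollary 7.10 of \cite{Ledoux91}) applied to the i.i.d.\ centered summands $h_i\circ h_{i_0}^{-1}-\phi\circ h_{i_0}^{-1}$ conditionally on $F_{i_0}=f_{i_0}$, and weak convergence via the multivariate CLT for the finite-dimensional laws plus the moment criterion of \cite{VaartWellner96} [Example 2.2.12], with \eqref{h_1} and \eqref{h_3} combining to give the bound $C_1C_3|s-t|^2$ (up to the factor $m/(m-1)$, which the paper absorbs into the constant $2C_1C_3$). Your explicit identity $f_i^{-1}\circ f_{i_0}=h_i\circ h_{i_0}^{-1}$ and your remark that the rate $n=m^{1/2+\alpha}$ plays no role in the continuous model are both accurate and consistent with what the paper does implicitly.
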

\begin{proof}
Let $i_0\in{\{1,\dots,m\}}$. Given $F_{i_0}=f_{i_0}$,
\begin{equation*}
\overline{\phi\circ h_{i_0}^{-1}}=\frac{1}{m-1}\sum_{\begin{subarray}{c}i=1\\i\neq i_0\end{subarray}}^{m}f_i^{-1}\circ f_{i_0}.
\end{equation*}
Noting also that $\phi\circ h_{i_0}^{-1}={f_{ES}^{-1}}\circ f_{i_0}$, consistency of $\overline{\phi\circ h_{i_0}^{-1}}$ follows by the same arguments as in proof of Theorem \ref{t_fm1_continu}.

We now turn to weak convergence. From the multivariate CLT, for any $k\in\mathbb N^{\ast}$ and fixed $(t_1,\ldots, t_k)\in [a,b]^k$,
\begin{eqnarray*}
\sqrt m
\left(\left(
\begin{array}{c}
\overline{\phi\circ h_{i_0}^{-1}}(t_1) \\ \vdots \\ \overline{\phi\circ h_{i_0}^{-1}}(t_k)
\end{array}\right)
-
\left(
\begin{array}{c}
\phi\circ h_{i_0}^{-1}(t_1) \\ \vdots \\ \phi\circ h_{i_0}^{-1}(t_k)
\end{array}
\right)
\right)
\stackrel{\mathcal D}{\longrightarrow}\mathcal N_k\left(0,\Gamma_0\right),
\end{eqnarray*}
where the covariance matrix $\Gamma_0=(\Gamma_{0,ij})_{i,j}$ is given by $\Gamma_{0,ij}=\mbox{cov}(H(h_{i_0}^{-1}(t_i)), H(h_{i_0}^{-1}(t_j)))=r(h_{i_0}^{-1}(t_i), h_{i_0}^{-1}(t_j))$, $i,j=1,\ldots, k$. It remains to show that $\{\sqrt m(\overline{\phi\circ h_{i_0}^{-1}}-\phi\circ h_{i_0}^{-1})\}$ is tight. Again, we verify the moment condition stated by \cite{VaartWellner96} [Example 2.2.12].
\begin{eqnarray*}
&&{\bf E}\left[\left|\sqrt{m}(\overline{\phi\circ h_{i_0}^{-1}}(s)-\phi\circ h_{i_0}^{-1}(s))-\sqrt{m}(\overline{\phi\circ h_{i_0}^{-1}}(t)-\phi\circ h_{i_0}^{-1}(t))\right|^2\right]\\
&&=\frac{m}{(m-1)^2}{\bf E}\left[\left|\sum_{\begin{subarray}{c}i=1\\i\neq i_0\end{subarray}}^m\left(f_i^{-1}(f_{i_0}(s))-{\bf E} f_i^{-1}(f_{i_0}(s))-\left(f_i^{-1}(f_{i_0}(t))-{\bf E} f_i^{-1}(f_{i_0}(t))\right)\right)\right|^2\right]\\
&&=\frac{m}{m-1} {\bf E}\left[\left|H(h^{-1}_{i_0}(s))-{\bf E} H(h^{-1}_{i_0}(s))-\left(H(h^{-1}_{i_0}(t))-{\bf E} H(h^{-1}_{i_0}(t))\right)\right|^2\right].
\end{eqnarray*}
Now, from assumptions \eqref{h_1} and \eqref{h_3}, we get that
\begin{eqnarray*}
{\bf E}\left[\left|\sqrt{m}(\overline{\phi\circ h_{i_0}^{-1}}(s)-\phi\circ h_{i_0}^{-1}(s))-\sqrt{m}(\overline{\phi\circ h_{i_0}^{-1}}(t)-\phi\circ h_{i_0}^{-1}(t))\right|^2\right]\leq 2C_1C_3|s-t|^2,
\end{eqnarray*}
which completes the proof.
\end{proof}

\subsection{Proofs of asymptotic results}\label{ss_proofs_proofs}

We now use Theorem \ref{t_fm1_continu} (given for the continuous model) to prove Theorem \ref{t_fm1}.
\\\\
\textsc{Proof of Theorem \ref{t_fm1}} Let $y\in[f(a), f(b)]$. The $n$ observation times are equidistant and for each $i=1,\ldots,m$, $f_i$ is almost surely increasing, hence $f_i^{-1}(y)-\frac{1}{n}\leq T_i(y)\leq f_i^{-1}(y)+\frac{1}{n}$. This implies that almost surely,
\begin{equation}\label{ineg1}
\overline{{f_{ES}^{-1}}}(y)-\frac{1}{n}\leq \widehat{{f_{ES}^{-1}}}(y) \leq \overline{{f_{ES}^{-1}}}(y)+\frac{1}{n}.
\end{equation}
Since
\begin{equation*}
\left\Vert\overline{{f_{ES}^{-1}}}+\frac{1}{n}-{f_{ES}^{-1}}\right\Vert_{\infty}\leq \left\Vert \overline{{f_{ES}^{-1}}}-{f_{ES}^{-1}}\right\Vert_{\infty}+\frac{1}{n},
\end{equation*}
we get that $\left\Vert\overline{{f_{ES}^{-1}}}+\frac{1}{n}-{f_{ES}^{-1}}\right\Vert_{\infty} \xrightarrow[m,n\to\infty]{as}0$, by Theorem \ref{t_fm1_continu}. A similar argument holds for LHS(\ref{ineg1}) and finally, $\left\Vert\widehat{{f_{ES}^{-1}}}-{f_{ES}^{-1}}\right\Vert_{\infty} \xrightarrow[m,n\to\infty]{as}0$.

Let $n=m^{\frac{1}{2}+\alpha}$ $(\alpha>0)$. From (\ref{ineg1}), we get that almost surely,
\begin{equation*}
\left\Vert\sqrt m(\widehat{{f_{ES}^{-1}}}-\overline{{f_{ES}^{-1}}})\right\Vert_{\infty}\leq \frac{1}{m^{\alpha}}.
\end{equation*}
Since $\left\Vert\sqrt m(\widehat{{f_{ES}^{-1}}}-\overline{{f_{ES}^{-1}}})\right\Vert_{\infty}=\left\Vert\sqrt m(\widehat{{f_{ES}^{-1}}}-{f_{ES}^{-1}})-\sqrt m(\overline{{f_{ES}^{-1}}}-{f_{ES}^{-1}})\right\Vert_{\infty}$, we get that\\
$\left\Vert\sqrt m(\widehat{{f_{ES}^{-1}}}-{f_{ES}^{-1}})-\sqrt m(\overline{{f_{ES}^{-1}}}-{f_{ES}^{-1}})\right\Vert_{\infty}$ converges almost surely to 0 as $m$ tends to infinity. Combining Theorem \ref{t_fm1_continu} and Theorem 4.1 in \cite{Billingsley68}, it follows that $\sqrt m(\widehat{{f_{ES}^{-1}}}-{f_{ES}^{-1}}) \xrightarrow[m\to\infty]{\D}G$. 
\\
We now turn to proof of Lemma \ref{lcb}.
\\
\\
\textsc{Proof of Lemma \ref{lcb}}
Consider first the continuous model (\ref{m_continu}), and define
\begin{equation*}
\overline{\gamma\circ f^{-1}}=\frac{1}{m}\sum_{i=1}^m\left(f_i^{-1}\right)^2.
\end{equation*}
Using similar arguments as in proof of Theorem \ref{t_fm1_continu}, we get that $\left\Vert \overline{\gamma\circ f^{-1}}-\gamma\circ f^{-1}\right\Vert_{\infty}\xrightarrow[m\to\infty]{as}0.$

Now, since $|T_i(y)-f_i^{-1}(y)|\leq \frac{1}{n}$, we obtain by straightforward calculations that almost surely,
\begin{equation*}
-\frac{1}{n}\cdot\frac{1}{m}\sum_{i=1}^m|f_i^{-1}(y)|\leq\frac{1}{m}\sum_{i=1}^m T_i.f_i^{-1}(y)-\overline{\gamma\circ f^{-1}}(y)\leq \frac{1}{n}\cdot\frac{1}{m}\sum_{i=1}^m|f_i^{-1}(y)|,
\end{equation*}
which implies that $\frac{1}{m}\sum_{i=1}^m T_i.f_i^{-1}(y)-\overline{\gamma\circ f^{-1}}(y) \xrightarrow[m,n\to\infty]{as}0$, from which we deduce that $\frac{1}{m}\sum_{i=1}^m T_i.f_i^{-1}(y)\xrightarrow[m,n\to\infty]{as}\gamma\circ f^{-1}(y)$.

From $|T_i(y)-f_i^{-1}(y)|\leq \frac{1}{n}$, we also get that
\begin{equation*}
0\leq \frac{1}{m}\sum_{i=1}^m T_i^2-\frac{2}{m}\sum_{i=1}^m T_i.f_i^{-1}(y)+\frac{1}{m}\sum_{i=1}^m \left(f_i^{-1}(y)\right)^2\leq\frac{1}{n^2},
\end{equation*}
that is, $0\leq \widehat{\gamma\circ f^{-1}}(y)-\frac{2}{m}\sum_{i=1}^m T_i.f_i^{-1}(y)+\overline{\gamma\circ f^{-1}}(y)\leq\frac{1}{n^2},$ from which we deduce that that $\widehat{\gamma\circ f^{-1}}(y)\xrightarrow[m,n\to\infty]{as}\gamma\circ f^{-1}(y)$. Combining this with Theorem \ref{t_fm1} completes the proof.
\begin{proof}[Proof of Theorem~\ref{t_f}]
For $F$ an arbitrary function, let $F^{-1}$ denote its generalized inverse, defined by $F^{-1}(t)=\inf\{y:F(y)\geq t\}$. By Theorem \ref{t_fm1}, for all $y\in[f(a),f(b)]$, $$\widehat{{f_{ES}^{-1}}}(y) \xrightarrow[m,n\to\infty]{as} {f_{ES}^{-1}}(y).$$ By Lemma 21.2 in \cite{Vaart98}, $$(\widehat{{f_{ES}^{-1}}})^{-1}(t) \xrightarrow[m,n\to\infty]{as} ({f_{ES}^{-1}})^{-1}(t)$$ at every $t$ where $({f_{ES}^{-1}})^{-1}$ is continuous. Since ${f_{ES}^{-1}}$ is continuous and stricly increasing, $({f_{ES}^{-1}})^{-1}$ is a proper inverse and is equal to $f\circ\phi^{-1}$, hence for all $t\in[a,b]$, $$(\widehat{{f_{ES}^{-1}}})^{-1}(t) \xrightarrow[m,n\to\infty]{as} f\circ\phi^{-1}(t).$$ Now, for all $t\in[a,b]$, $(\widehat{{f_{ES}^{-1}}})^{-1}(t)$ can be rewritten as $$(\widehat{{f_{ES}^{-1}}})^{-1}(t)=v_0 {\bf 1}_{\{a\}}(t)+ \sum_{k=0}^{K(m,n)-1}v_{k+1}{\bf 1}_{(u_k,u_{k+1}]}(t),$$ and by construction, letting $t\in(u_{k(m,n)},u_{k(m,n)+1}]$ $(k(m,n)\in\mathcal K)$, we have $v_{k(m,n)}\leq \widehat{{f_{ES}}}(t) \leq v_{k(m,n)+1}$. Combining this and the above equality yields that for all $t\in (u_{k(m,n)},u_{k(m,n)+1}]$, $|\widehat{{f_{ES}}}(t)-(\widehat{{f_{ES}^{-1}}})^{-1}(t)|\leq v_{k(m,n)+1}-v_{k(m,n)}$. Since ${f_{ES}}$ is continuous, $v_{k(m,n)+1}-v_{k(m,n)}\xrightarrow[m,n\to\infty]{}0$, and $|\widehat{{f_{ES}}}(t)-(\widehat{{f_{ES}^{-1}}})^{-1}(t)|\xrightarrow[m,n\to\infty]{as}0$. It follows that almost surely, $\widehat{{f_{ES}}}(t)$ converges to ${f_{ES}}(t)$. Uniform convergence finally follows from Dini's theorem.
\end{proof}

We now use Theorem \ref{t_h_continu} (given for the continuous model) to prove Theorem \ref{t_h}. A preliminary definition and a lemma are needed.

Let define  the modulus of continuity of $H$ as $$K_H(\delta)=\sup_{\begin{subarray}{c}(s,t) \in[a,b]^2\\ |s-t| \leq \delta \end{subarray}} \left|H_1\circ H_2^{-1}(s)-H_1\circ H_2^{-1}(t)\right|\quad (\delta\geq 0),$$ where $H_1$ and $H_2$ are two independent copies of $H$. Since for all $\delta\geq 0$, $0\leq K_H(\delta)\leq b-a$ almost surely, we can define $L_H^1(H_2, \delta)$ and $L_H^2(H_2, \delta)$ as $L_H^1(H_2, \delta)={\bf E}[K_H(\delta)|H_2]$ and $L_H^2(H_2, \delta)={\bf E}[K_H^2(\delta)|H_2]$ $(\delta\geq 0)$, and we have $L_H^1(H_2,0)=L_H^2(H_2,0)=0$. From continuity of $H$, it holds that $L_H^1(H_2,\cdot)$ and $L_H^2(H_2,\cdot)$ are right-continuous at 0. Given $H_2=h$, this implies that for $\epsilon>0$, there exists $\eta_{\epsilon}>0$ such that $L_H^1(h,\eta_{\epsilon})< \epsilon$. We shall use this result in proving the following lemma:

\begin{lemma}\label{lemtech}
Let $t\in[a,b]$. Under assumption \eqref{d_h}, the following holds:
\begin{equation*}
\sup_{t\in[a,b]}\left|\overline{\phi\circ h_{i_0}^{-1}}\left(t+\frac{1}{n}\right)-\overline{\phi\circ h_{i_0}^{-1}}(t)\right|\xrightarrow[m,n\to\infty]{as}0.
\end{equation*}
\end{lemma}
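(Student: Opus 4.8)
The plan is to bound the supremum of the increment by a sample average of moduli of continuity and then invoke the strong law of large numbers conditionally on $h_{i_0}$. First I would pass to the continuous form of the estimator. Since $f_i^{-1}\circ f_{i_0}=h_i\circ h_{i_0}^{-1}$, conditionally on $F_{i_0}=f_{i_0}$ we have
$$\overline{\phi\circ h_{i_0}^{-1}}(t)=\frac{1}{m-1}\sum_{\begin{subarray}{c}i=1\\i\neq i_0\end{subarray}}^{m}h_i\circ h_{i_0}^{-1}(t),$$
so the quantity of interest is the average of the increments $h_i\circ h_{i_0}^{-1}(t+\frac1n)-h_i\circ h_{i_0}^{-1}(t)$. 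Each map $h_i\circ h_{i_0}^{-1}$ is increasing, so every increment is nonnegative and, uniformly in $t$, bounded above by the modulus of continuity of the pair $(h_i,h_{i_0})$ at scale $1/n$. Writing $K_H^{(i)}(1/n)$ for this quantity (which is exactly $K_H(1/n)$ evaluated at $H_1=h_i$, $H_2=h_{i_0}$), I obtain
$$\sup_{t\in[a,b]}\left|\overline{\phi\circ h_{i_0}^{-1}}\left(t+\tfrac1n\right)-\overline{\phi\circ h_{i_0}^{-1}}(t)\right|\le\frac{1}{m-1}\sum_{i\neq i_0}K_H^{(i)}(1/n).$$

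Next I would decouple the two limits. Fix $\epsilon>0$. By the right-continuity at $0$ of $L_H^1(h_{i_0},\cdot)$ recorded just before the lemma, choose $\eta_\epsilon>0$ with $L_H^1(h_{i_0},\eta_\epsilon)<\epsilon$. Because $\delta\mapsto K_H(\delta)$ is nondecreasing, for every $n$ with $1/n\le\eta_\epsilon$ we have $K_H^{(i)}(1/n)\le K_H^{(i)}(\eta_\epsilon)$, which frees the bound from its dependence on $n$: the left-hand side above is then at most $(m-1)^{-1}\sum_{i\neq i_0}K_H^{(i)}(\eta_\epsilon)$.

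Conditionally on $h_{i_0}$, the variables $(K_H^{(i)}(\eta_\epsilon))_{i\neq i_0}$ are i.i.d.\ (the $h_i$ for $i\neq i_0$ are i.i.d.\ and independent of $h_{i_0}$) and bounded by $b-a$, hence integrable with conditional mean $L_H^1(h_{i_0},\eta_\epsilon)$. The strong law of large numbers then gives, almost surely, $(m-1)^{-1}\sum_{i\neq i_0}K_H^{(i)}(\eta_\epsilon)\to L_H^1(h_{i_0},\eta_\epsilon)<\epsilon$ as $m\to\infty$. Combining the two displays, one fixes $n_0$ so that $1/n\le\eta_\epsilon$ for $n>n_0$, and then an $\omega$-dependent $m_0$ beyond which the average is below $\epsilon$; for $m>m_0$ and $n>n_0$ the supremum is below $\epsilon$ almost surely, which is precisely the claimed double-limit convergence.

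The main obstacle is handling the genuine joint limit $m,n\to\infty$ rather than an iterated one: the summands $K_H^{(i)}(1/n)$ themselves depend on $n$, so the law of large numbers cannot be applied to them directly. The device that resolves this is the monotonicity of the modulus of continuity combined with the right-continuity of $L_H^1(h_{i_0},\cdot)$ at $0$, which lets me freeze the scale at $\eta_\epsilon$ before averaging over $i$. A secondary point to verify carefully is the conditional i.i.d.\ structure that legitimizes the strong law given $h_{i_0}$, together with the uniform-in-$t$ nature of the modulus-of-continuity bound, which is what upgrades pointwise control of the increment to control of the supremum.
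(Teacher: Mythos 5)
Your proposal is correct and follows essentially the same route as the paper's own proof: bounding the supremum of the increment by the empirical average of the conditional moduli of continuity $\tilde K_H^i(1/n)$, freezing the scale at $\eta_\epsilon$ via monotonicity of $K_H$ and right-continuity of $L_H^1(h_{i_0},\cdot)$ at $0$, and then applying the strong law of large numbers conditionally on $h_{i_0}$ to get the bound $L_H^1(h_{i_0},\eta_\epsilon)<\epsilon$. The only difference is cosmetic: you spell out the choice of $n_0$ and the $\omega$-dependent $m_0$ for the joint limit, where the paper phrases the same conclusion as a $\limsup_{m,n}$ bound.
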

\begin{proof}
Letting $t\in[a,b]$, one easily shows that
\begin{equation*}
\left|\overline{\phi\circ h_{i_0}^{-1}}\left(t+\frac{1}{n}\right)-\overline{\phi\circ h_{i_0}^{-1}}(t)\right| \leq \frac{1}{m-1} \sum^m_{\begin{subarray}{c}i=1\\i\neq i_0\end{subarray}} \left|h_i\circ h_{i_0}^{-1}\left(t+\frac{1}{n}\right)-h_i\circ h_{i_0}^{-1}(t)\right|.
\end{equation*}
This in turn implies that
\begin{eqnarray*}
\sup_{t\in[a,b]}\left|\overline{\phi\circ h_{i_0}^{-1}}\left(t+\frac{1}{n}\right)-\overline{\phi\circ h_{i_0}^{-1}}(t)\right| &\leq& \frac{1}{m-1} \sum^m_{\begin{subarray}{c}i=1\\i\neq i_0\end{subarray}} \sup_{\begin{subarray}{c}(s,t)\in[a,b]^2\\|s-t|\leq\frac{1}{n}\end{subarray}}\left|h_i\circ h_{i_0}^{-1}(s)-h_i\circ h_{i_0}^{-1}(t)\right|\\
&=&\frac{1}{m-1} \sum^m_{\begin{subarray}{c}i=1\\i\neq i_0\end{subarray}}\tilde K_H^i \left(\frac{1}{n}\right),
\end{eqnarray*}
where the $\tilde K_H^i \left(\frac{1}{n}\right)$ $(i\in\{1,\ldots,m\}\backslash i_0)$ are independent random variables distributed as $K_H\left(\frac{1}{n}\right)|H_2=h_{i_0}$. Since $K_H(\cdot)$ is increasing, if $n\in\mathbb{N}$ is sufficiently large so that $\frac{1}{n}<\eta_{\epsilon}$, we get that
\begin{equation*}
\sup_{t\in[a,b]}\left|\overline{\phi\circ h_{i_0}^{-1}}\left(t+\frac{1}{n}\right)-\overline{\phi\circ h_{i_0}^{-1}}(t)\right| \leq \frac{1}{m-1} \sum^m_{\begin{subarray}{c}i=1\\i\neq i_0\end{subarray}} \tilde K_H^i \left(\eta_{\epsilon} \right).
\end{equation*}
By the law of large numbers, we get that almost surely,
\begin{equation*}
0\leq\lim\sup_{m,n} \sup_{t\in[a,b]}\left|\overline{\phi\circ h_{i_0}^{-1}} \left(t+\frac{1}{n}\right) -\overline{\phi \circ h_{i_0}^{-1}}(t) \right| \leq L_H^1(h_{i_0},\eta_{\epsilon})<\epsilon.
\end{equation*}
This holds for any $\epsilon>0$, hence
\begin{equation*}
\sup_{t\in[a,b]}\left|\overline{\phi\circ h_{i_0}^{-1}}\left(t+\frac{1}{n}\right)-\overline{\phi\circ h_{i_0}^{-1}}(t)\right|\xrightarrow[m,n\to\infty]{as}0.
\end{equation*}
\end{proof}   
\textsc{Proof of Theorem \ref{t_h}} Let $t\in[a,b]$. For each $i\in\{1,\ldots,m\}$, $f_i$ is almost surely increasing, hence almost surely $f_i^{-1}\circ f_{i_0}(t_{j_0})-\frac{1}{n}\leq T_i\leq f_i^{-1}\circ f_{i_0}(t_{j_0})+\frac{1}{n}$. Recall that $t_{j_0}=\arg\min_{j\in{\{1,\dots,n\}}}\left|t_j-t\right|$, hence $t-\frac{1}{n}\leq t_{j_0}\leq t+\frac{1}{n}$. Thus, combining the above two inequalities, we get that almost surely $f_i^{-1}\circ f_{i_0}(t-\frac{1}{n})-\frac{1}{n}\leq T_i\leq f_i^{-1}\circ f_{i_0}(t+\frac{1}{n})+\frac{1}{n}$, from which we deduce:
\begin{equation}\label{e4}
\overline{\phi\circ h_{i_0}^{-1}}\left(t-\frac{1}{n}\right)-\frac{1}{n} \leq \widehat{\phi\circ h_{i_0}^{-1}}(t) \leq\overline{\phi\circ h_{i_0}^{-1}}\left(t+\frac{1}{n}\right)+\frac{1}{n}.
\end{equation}
We shall focus on the upper bound in this inequality, since the same kind of argument holds for the lower bound. Straightforward calculation yields
\begin{eqnarray*}
&&\sup_{t\in[a,b]}\left|\overline{\phi\circ h_{i_0}^{-1}}\left(t+\frac{1}{n}\right)+\frac{1}{n}-\phi\circ h_{i_0}^{-1}(t)\right|\\
&&\hspace{2cm}\leq \sup_{t\in[a,b]}\left|\overline{\phi\circ h_{i_0}^{-1}}\left(t+\frac{1}{n}\right)-\overline{\phi\circ h_{i_0}^{-1}}(t)\right|+\frac{1}{n}+\left\Vert\overline{\phi\circ h_{i_0}^{-1}}-\phi\circ h_{i_0}^{-1}\right\Vert_\infty,
\end{eqnarray*}
where the RHS of this inequality tends to 0 as $m$ and $n$ tend to infinity, by Lemma \ref{lemtech} and Theorem \ref{t_h_continu} in \cite{Billingsley68}. It follows that $\left\Vert\widehat{\phi\circ h_{i_0}^{-1}}-\phi\circ h_{i_0}^{-1}\right\Vert_\infty\xrightarrow[m,n\to\infty]{as}0$.

We now turn to weak convergence. Assume that $n=m^{\frac{1}{2}+\alpha}$ $(\alpha>0)$. From (\ref{e4}), it holds
\begin{eqnarray*}
\left\Vert\sqrt m\left(\widehat{\phi\circ h_{i_0}^{-1}}-\overline{\phi\circ h_{i_0}^{-1}}\right)\right\Vert_\infty&\leq&\sup_{t\in[a,b]}\left|\sqrt m\left(\overline{\phi\circ h_{i_0}^{-1}}\left(t+\frac{1}{n}\right)-\overline{\phi\circ h_{i_0}^{-1}}(t)\right)\right|\\
&&+\sup_{t\in[a,b]}\left|\sqrt m\left(\overline{\phi\circ h_{i_0}^{-1}}\left(t-\frac{1}{n}\right)-\overline{\phi\circ h_{i_0}^{-1}}(t)\right)\right|+\frac{2}{m^{\alpha}}\\
&\leq& 2Z_m+\frac{2}{m^{\alpha}},
\end{eqnarray*}
where $Z_m=\sqrt m\frac{1}{m-1} \sum^m_{\begin{subarray}{c}i=1\\i\neq i_0\end{subarray}} \tilde K_H^i \left(\frac{1}{n}\right)$. Since
\begin{equation*}
\mbox{var}(Z_m)=\frac{m}{m-1}\left(L_H^2\left(h_{i_0},\frac{1}{n}\right)-\left\{L_H^1\left(h_{i_0},\frac{1}{n}\right)\right\}^2\right)\xrightarrow[m\to\infty]{}0
\end{equation*}
(recall that $n=m^{\frac{1}{2}+\alpha}$ and that $L_H^1(h_{i_0},\cdot)$ and $L_H^2(h_{i_0},\cdot)$ are right-continuous at 0), we get that
\begin{equation*}
\left\Vert\sqrt m\left(\widehat{\phi\circ h_{i_0}^{-1}}-\overline{\phi\circ h_{i_0}^{-1}}\right)\right\Vert_\infty \xrightarrow[m\to\infty]{P}0,
\end{equation*}
hence by Theorem \ref{t_h_continu} and Theorem 4.1 in \cite{Billingsley68}, $\sqrt{m}(\widehat{\phi\circ h_{i_0}^{-1}}-\phi\circ h_{i_0}^{-1})\xrightarrow[m\to\infty]{\D}Z$.

\textsc{Proof of Lemma \ref{lcb2}}
Consider first the continuous model (\ref{m_continu}), and define conditionally to $F_{i_0}=f_{i_0}$
\begin{equation*}
\overline{\gamma\circ h_{i_0}^{-1}}=\frac{1}{m-1}\sum_{\begin{subarray}{c}i=1\\i\neq i_0\end{subarray}}^m\left(f_i^{-1}\circ f_{i_0}\right)^2.
\end{equation*}
Using similar arguments as in proof of Theorem \ref{t_h_continu}, we get that $$\left\Vert \overline{\gamma\circ h_{i_0}^{-1}}-\gamma\circ h_{i_0}^{-1}\right\Vert_{\infty}\xrightarrow[m\to\infty]{as}0.$$

Now, from $f_i^{-1}\circ f_{i_0}(t-\frac{1}{n})-\frac{1}{n}\leq T_i\leq f_i^{-1}\circ f_{i_0}(t+\frac{1}{n})+\frac{1}{n}$, we get the following inequality
$$
|T_i-f_i^{-1}\circ f_{i_0}(t)|\leq \left|f_i^{-1}\circ f_{i_0}\left(t-\frac{1}{n}\right)-f_i^{-1}\circ f_{i_0}(t)\right|+\left|f_i^{-1}\circ f_{i_0}\left(t+\frac{1}{n}\right)-f_i^{-1}\circ f_{i_0}(t)\right|+\frac{2}{n}.
$$
Acting as in proof of Lemma \ref{lcb}, it is fairly straightforward to show that for $t\in[a,b]$, $\frac{1}{m-1}\sum_{\begin{subarray}{c}i=1\\i\neq i_0\end{subarray}}^mT_i.f_i^{-1}\circ f_{i_0}(t)\xrightarrow[m,n\to\infty]{as}\gamma\circ h_{i_0}^{-1}(t)$.

From the above inequality, we also obtain that
$$
|T_i-f_i^{-1}\circ f_{i_0}(t)|\leq 2\tilde K_H^i\left(\frac{1}{n}\right)+\frac{2}{n}.
$$
Summing over $i\in\{1,\ldots,m\}\backslash i_0$, and then using convergence of $\frac{1}{m-1}\sum_{i\neq i_0}T_i.f_i^{-1}\circ f_{i_0}(t)$, of $\overline{\gamma\circ h_{i_0}^{-1}}(t)$, together with right-continuity of $L_H^1(h_{i_0},\cdot)$ and $L_H^2(h_{i_0},\cdot)$ at 0, yield that $\widehat{\gamma\circ h_{i_0}^{-1}}(t)\xrightarrow[m,n\to\infty]{as}\gamma\circ h_{i_0}^{-1}(t)$. This and convergence of $\widehat{\phi\circ h_{i_0}^{-1}}(t)$ complete the proof of Lemma \ref{lcb2}.
\bibliographystyle{plain}
\bibliography{traffic}
\end{document}
